\title[Symplectic embeddings of ellipsoids in dimension greater than four]{Symplectic embeddings of ellipsoids\\in dimension greater than four}
\author{Olguta Buse}
\address{Department of Mathematics\\Indiana University Purdue University Indianapolis\\\newline
Indianapolis IN 46202\\USA}
\email{buse@math.iupui.edu}
\urladdr{}
\author{Richard Hind}
\address{Department of Mathematics\\University of Notre Dame\\\newline
Notre Dame IN 46556\\USA}
\email{hind.1@nd.edu}
\urladdr{}
\def\nb{\nobreak}
\def\makerm#1{\@namedef{#1}{\mathrm{#1}}}\makeatother
\def\Bbb{\mathbb}
\def\CP{\C`\P}
\let\tsty\textstyle
\def\unfrac#1{#1/}
\def\unpfrac#1#2{#1/(#2)}
\def\upnpfrac#1#2{(#1)/(#2)}
\def\upnfrac#1#2{(#1)/#2}
\def\punfrac#1#2{(#1/#2)}
\def\interior{\mathaccent "7017 }
\newtheorem{theorem}{Theorem}[section]
\newtheorem{proposition}[theorem]{Proposition}
\newtheorem{lemma}[theorem]{Lemma}
\theoremstyle{definition}
\newtheorem{remark}[theorem]{Remark}
\newtheorem{definition}[theorem]{Definition}
\def\N{\mathbb N}
\def\bigmid{\,\bigg|\,}
\newcommand{\CC}{{\mathbb C}}
\newcommand{\NN}{{\mathbb N}}
\newcommand{\RR}{{\mathbb R}}
\def\P{\mathbb P}
\renewcommand{\v}[1]{\mbox{\boldmath $#1$}}
\begin{document}

\begin{asciiabstract}
We study symplectic embeddings of ellipsoids into balls. In the main
construction, we show that a given embedding of 2m-dimensional
ellipsoids can be suspended to embeddings of ellipsoids in any higher
dimension. In dimension 6,s if the ratio of the areas of any two axes
is sufficiently large then the ellipsoid is flexible in the sense that
it fully fills a ball. We also show that the same property holds in
all dimensions for sufficiently thin ellipsoids E(1,..., a). A
consequence of our study is that in arbitrary dimension a ball can be
fully filled by any sufficiently large number of identical smaller
balls, thus generalizing a result of Biran valid in dimension 4.
\end{asciiabstract}

\begin{abstract} 
We study symplectic embeddings of ellipsoids into balls. In the main
construction, we show that a given embedding of $2m$--dimensional ellipsoids can
be suspended to embeddings of ellipsoids in any higher dimension. In dimension
$6$, if the ratio of the areas of any two axes is sufficiently large then the
ellipsoid is flexible in the sense that it fully fills a ball. We also show
that the same property holds in all dimensions for sufficiently thin ellipsoids
$E(1,\ldots, a)$. A consequence of our study is that in arbitrary dimension a
ball can be fully filled by any sufficiently large number of identical smaller
balls, thus generalizing a result of Biran valid in dimension $4$.
\end{abstract}

\maketitle

\section{Introduction}\label{sec1}

Let $E(a_1, \dots, a_n) \subset \RR^{2n}$ be the ellipsoid \[E(a_1, \dots,
a_n)=\biggl\{\,\sum_{i=1}^n \frac{\pi(x_i^2+ y_i^2)}{a_i} \le 1\biggr\}.\]
Ellipsoids inherit a symplectic structure from the standard form $\omega_0 =
\sum_{i=1}^n dx_i \wedge dy_i$ on $\RR^{2n}$. Then, in our notation, the ball
of capacity $c$ is written \[B^{2n}(c)=E(c, \dots, c).\] Let us also write
$\lambda E(a_1, \dots, a_n)$ and $\lambda B(c)$ for the ellipsoid $E(\lambda
a_1, \dots, \lambda a_n)$ and ball $B(\lambda c)$ respectively. Throughout the
paper the notation \[E(a_1, \dots, a_n) \hookrightarrow E(b_1, \dots, b_n)\]
will mean that for all $\lambda>1$ there exists a symplectic embedding $E(a_1,
\dots, a_n) \hookrightarrow \lambda E(b_1, \dots, b_n)$.

We are interested in the problem of determining when there exists a symplectic
embedding from a given ellipsoid into (an arbitrarily small neighborhood of)
the ball of capacity $c$.

This problem has been completely solved when $n=2$, that is, in dimension $4$,
in the sense that the function \[g(a) := \inf\{\,c\mid E(1,a) \hookrightarrow
B^4(c)\}\] is described by McDuff and Schlenk~\cite[Theorem $1.1.2$]{mcdsch}; see our
\fullref{capac4}.

Here we begin a systematic study of the corresponding functions in higher
dimensions. The main construction that we introduce allows us to extend known
results on embeddings in low dimension to higher dimensional embeddings:

\begin{proposition}
 \label{e}
  Suppose that $$E(a_1, \dots ,a_m) \hookrightarrow E(a'_1, \dots ,a'_m).\vspace*{-6pt}$$
Then also
  $$E(a_1, \dots ,a_m, a_{m+1}, \dots ,a_n) \hookrightarrow E(a'_1, \dots ,a'_m,
a_{m+1}, \dots ,a_n)$$
  for any values $a_{m+1}, \dots ,a_n.$
\end{proposition}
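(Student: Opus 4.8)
The plan is to reduce immediately to the case of adjoining a single semiaxis, $n=m+1$; the general statement then follows by adding $a_{m+1},\dots,a_n$ one at a time, applying the one-step result in successively higher dimensions. So write $b:=a_{m+1}$, fix $\lambda>1$, and let $\phi\colon E(a_1,\dots,a_m)\to\lambda E(a'_1,\dots,a'_m)$ be a symplectic embedding supplied by the hypothesis. It suffices to build a symplectic embedding $E(a_1,\dots,a_m,b)\to\lambda E(a'_1,\dots,a'_m,b)$, since $\lambda>1$ was arbitrary.

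The construction uses the circle action rotating the new coordinate plane, under which $E(a_1,\dots,a_m,b)$ is fibered by rescaled copies of $E(a_1,\dots,a_m)$. Write a point as $(z,w)\in\RR^{2m}\times\RR^2$, put $c:=\pi|w|^2$, and let $\sigma=\sum_{i=1}^m dx_i\wedge dy_i$ be the standard form on the $\RR^{2m}$--factor. Then $E(a_1,\dots,a_m,b)=\{(z,w):c<b,\ z\in(1-c/b)E(a_1,\dots,a_m)\}$, and for each $c\in[0,b)$ the rescaled map $z\mapsto(1-c/b)\phi\bigl(z/(1-c/b)\bigr)$ is a symplectomorphism of $(1-c/b)E(a_1,\dots,a_m)$ into $(1-c/b)\lambda E(a'_1,\dots,a'_m)$. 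I would therefore seek the embedding in the fibered form
\[
\Phi(z,w)=\Bigl((1-c/b)\,\phi\bigl(z/(1-c/b)\bigr),\ e^{ig(z,c)}w\Bigr),\qquad c=\pi|w|^2,
\]
for a real function $g$ still to be chosen. Every such $\Phi$ preserves $c$ and carries the fiber $(1-c/b)E(a_1,\dots,a_m)\times\{\pi|w|^2=c\}$ into $(1-c/b)\lambda E(a'_1,\dots,a'_m)\times\{\pi|w'|^2=c\}$, so, taking the union over $c$, it maps $E(a_1,\dots,a_m,b)$ into $E(\lambda a'_1,\dots,\lambda a'_m,b)$, hence into $\lambda E(a'_1,\dots,a'_m,b)$, no matter what $g$ is. The whole problem is thus to choose $g$ making $\Phi$ symplectic.

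Since $\Phi$ preserves $c=\pi|w|^2$, commutes with the $S^1$--action, and restricts on each level set of $c$ to a symplectomorphism in the $z$--variables composed with a rotation in $w$, a direct computation shows that $\Phi^*\omega_0$ differs from $\omega_0$ only by a cross term $dc\wedge\gamma$, where $\gamma=\gamma(z,c)$ is a $1$--form in the fiber directions equal to $\beta$ plus a constant multiple of $d_zg$, and the $1$--form $\beta$ depends only on the family of fiberwise maps, not on $g$. Hence $\Phi$ is symplectic precisely when $g$ is chosen so that $d_zg(\,\cdot\,,c)$ cancels $\beta(\,\cdot\,,c)$ on every fiber. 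Now $\beta$ is a component of the closed $2$--form obtained by pulling back $\omega_0$ under the $g\equiv0$ map, so it is closed along each fiber; and the fibers $(1-c/b)E(a_1,\dots,a_m)$ are convex, hence simply connected, so $\beta$ is fiberwise exact. Integrating along radial segments from the origin, which lies in every fiber, produces such a $g(z,c)$, smooth in all variables including at $c=0$. With this $g$ the displayed formula defines $\Phi$ smoothly on all of $E(a_1,\dots,a_m,b)$, since $|w|^2$ is a smooth function of $w$ and nothing degenerates across $\{w=0\}$; computing $D\Phi$ along $\{w=0\}$, where it is block diagonal with $D\phi$ on the $z$--block and a rotation on the $w$--block, shows $\Phi$ is an immersion there, and injectivity is immediate since $\Phi$ sends the sets $\{w=0\}$ and $\{w\neq0\}$ inside the ellipsoid to their primed counterparts and is injective on each. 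Iterating over the remaining semiaxes finishes the proof.

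I expect the crux to be the coupling step: recognizing the circle action in the new factor (equivalently, the fibration by reduced ellipsoids) as the organizing structure, and then producing the angle correction $g$, which works only because the fibers are simply connected so that the obstruction $1$--form $\beta$ is exact. A secondary nuisance is the bookkeeping at the skeleton $\{w=0\}$, where one must check that the fibered map extends to a genuine smooth embedding and not merely to a continuous one.
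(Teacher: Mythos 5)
Your construction is genuinely different from the paper's at the technical level: you suspend the embedding $\phi$ itself, fiber by fiber, and then repair symplecticity by hand with a $z$--dependent rotation $e^{ig(z,c)}$ in the new coordinate plane, whereas the paper first invokes the Extension after Restriction Principle to realize $\phi$ as the time--one flow of a Hamiltonian $H$ on all of $\CC^m$ and then suspends the \emph{Hamiltonian}, setting $K=H_{r}$ with $r$ a function of the new coordinates. In that version the coupling problem you solve explicitly (killing the cross term $dc\wedge\gamma$) is absorbed automatically: $K$ Poisson--commutes with $r$, so its flow is symplectic for free, preserves the fibers, and the "rotation in $w$" appears as the uncontrolled but harmless drift of the last coordinates along the level sets of $r$. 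Your closed-plus-star-shaped argument for the exactness of $\beta$ is correct and is in effect a de-Hamiltonianized version of the same mechanism; the radial primitive exists because $0$ lies in every fiber. (One notational slip: with the paper's convention $\lambda E(a_1,\dots,a_m)=E(\lambda a_1,\dots,\lambda a_m)$, the fiber $(1-c/b)E(a_1,\dots,a_m)$ is carried symplectically by $z\mapsto\sqrt{1-c/b}\;\phi\bigl(z/\sqrt{1-c/b}\bigr)$, not by the map with linear factors; this is cosmetic and does not affect the structure of the argument.)

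There is, however, one genuine gap, and you have looked for it in the wrong place. Along $\{w=0\}$ nothing degenerates, as you say; the danger is at the opposite pole, the circle $\{z=0,\ \pi|w|^2=b\}$ in the boundary of $E(a_1,\dots,a_m,b)$, where the scaling factor tends to $0$ and the fibers collapse to a point. Writing $\epsilon$ for the scaling parameter and expanding $\phi(u)=\phi(0)+D\phi(0)u+O(|u|^2)$, the fiber map is $\epsilon\,\phi(0)+D\phi(0)z+O(|z|^2/\epsilon)$ (up to the square-root normalization above), which extends continuously but in general not smoothly across $\epsilon=0$ unless $\phi$ is linear; the same degeneration infects $\partial_c\phi_c$, hence $\beta$ and your primitive $g$. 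So as written $\Phi$ need not be a smooth embedding of the \emph{closed} ellipsoid. The repair is exactly the ingredient the paper deploys at the outset: by Extension after Restriction one may assume $\phi$ is defined on $\mu E(a_1,\dots,a_m)$ for some $\mu>1$ with image still inside $\lambda E(a'_1,\dots,a'_m)$, and one then rescales by $1-c/(\mu b)\ge 1-1/\mu>0$, which never vanishes on the domain. Some form of this extension principle is needed in your argument anyway, since $\phi$ is a priori only given on the closed ellipsoid and you implicitly differentiate the rescaled family at its boundary. With that modification your proof goes through, and the one-axis-at-a-time induction is a perfectly good substitute for the paper's simultaneous treatment of all the extra coordinates.
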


 We will focus especially on dimension $6$, where the problem is to describe
the function of two variables \[f(a,b):=\inf\{\, c\mid E(1,a,b) \hookrightarrow
B^6(c) \}.\]
Note that by symmetry and rescaling we may assume that $1 \le a \le b$.

We are able to describe $f$ completely in particular when $a^2 + b^2 \le 4$ and
also when $a^2 + b^2 \ge 1.41 \times 10^{101}$.
In other words we have optimal embedding results when the ellipsoid is either
relatively close to a ball or in the other extreme when it is, up to scale,
contained in a relatively small neighborhood of a $4$--dimensional ellipsoid.
The results for $a$, $b$ small are contained in the sequence of Lemmas
\ref{emb1}, \ref{emb2}, \ref{emb3}, \ref{emb4}. More known values of $f(a,b)$
are illustrated in \fullref{figure1}. The result for $a$ or $b$ large is
perhaps the main result of our paper and can be stated as follows.

\begin{figure}[ht!]
\labellist
\small\hair2pt
\pinlabel {$1$} [t] at 199 482
\pinlabel {$2$} [t] at 236 482
\pinlabel {$4$} [t] at 307 482
\pinlabel {$5$} [t] at 343 482
\pinlabel {$5 `\smash{\frac14}$} [t] at 354 482
\pinlabel {$8$} [t] at 451 482
\pinlabel {$8 `\smash{\frac1{36}}$} [t] at 464 482
\pinlabel {$9$} [t] at 487 482
\pinlabel {$b$} [t] at 548 487
\pinlabel {$1$} [r] at 158 523
\pinlabel {$2$} [r] at 158 558
\pinlabel {$\frac52$} [r] at 158 576
\pinlabel {$3$} [r] at 158 595
\pinlabel {$a$} [r] at 163 649
\pinlabel {$f(a,b)=b$} [bl] at 162 539
\pinlabel {$f(a,b)=2$} at 271 541
\pinlabel {$f(a,b)=2$} [t] at 402 523
\pinlabel {$f(g(b),b)=g(b)$} [l] at 321 561
\pinlabel {$f(a,b)=a$} at 306 582
\pinlabel {$f(a,b)=\sqrt[3]{ab}$} at 502 656
\endlabellist
\centering
  \includegraphics[width=.95\textwidth]{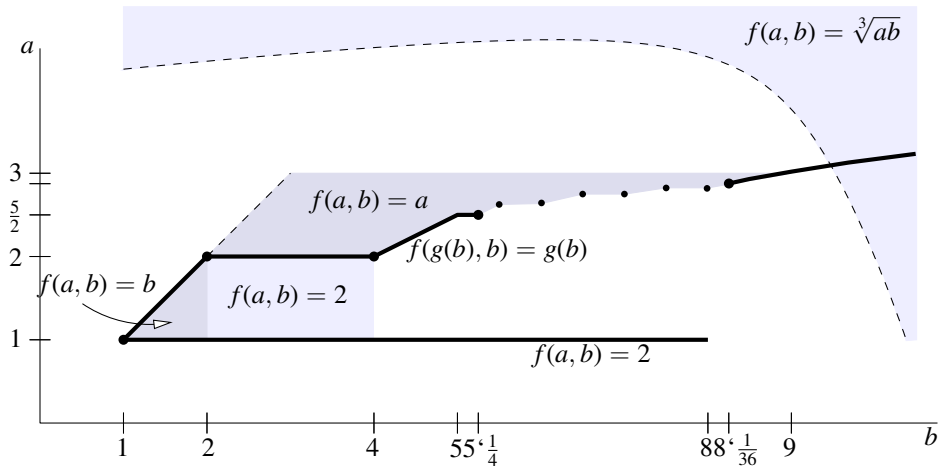}
  \caption{Known values of $f(a,b)$}
   \label{figure1}
\end{figure}

\begin{theorem}
 \label{fullfill}
  If $a^2 + b^2 \ge 1.41 \times 10^{101}$ then $E(1,a,b) \hookrightarrow
B((ab)^{1/3})$.
\end{theorem}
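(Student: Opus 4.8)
The plan is to reduce the six-dimensional statement to a four-dimensional embedding result via the suspension construction of \fullref{e}, and then invoke the sharp McDuff--Schlenk description of $g$ in dimension $4$ together with the fact that $g(a)=\sqrt a$ for $a$ in the "linear'' range $a\ge \tau^4$ (where $\tau$ is essentially the relevant threshold in \cite{mcdsch}). Concretely, since the target ball $B^6((ab)^{1/3})$ is the ellipsoid $E((ab)^{1/3},(ab)^{1/3},(ab)^{1/3})$, and we are free (by symmetry) to assume $1\le a\le b$, the goal $E(1,a,b)\hookrightarrow E((ab)^{1/3},(ab)^{1/3},(ab)^{1/3})$ would follow from a four-dimensional embedding of the form $E(1,a)\hookrightarrow E(s,t)$ with one of the two axes matching so that \fullref{e} can carry along the remaining axis. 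The natural choice is to normalize the $4$--dimensional ellipsoids so that the "$b$'' axis of the domain equals the corresponding axis of the target; after rescaling this amounts to showing $E(1/b,\,a/b)\hookrightarrow E((ab)^{1/3}/b,\,(ab)^{1/3}/b)=E((a/b^2)^{1/3},(a/b^2)^{1/3})$ is possible, i.e.\ a $4$--dimensional ball-filling of a thin ellipsoid.

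So the first step is algebraic: identify which $4$--dimensional embedding $E(1,c)\hookrightarrow B^4(c^{1/3}\cdot(\text{scalar}))$ one needs, check that after the suspension it yields exactly a ball (all three axes equal to $(ab)^{1/3}$), and verify that the "thinness'' of the resulting $4$--dimensional domain is controlled by $a^2+b^2$ being large. The point is that $a^2+b^2\ge 1.41\times10^{101}$ forces the relevant ratio of axes of the $2$--dimensional reduction to exceed the McDuff--Schlenk threshold beyond which $g$ is linear, i.e.\ beyond which the thin ellipsoid symplectically fills a ball on the nose (up to $\lambda>1$). I would extract the precise numerical threshold from \cite[Theorem~1.1.2]{mcdsch} — it is of the form "$g(x)=\sqrt x$ for $x\ge x_0$'' with $x_0$ an explicit algebraic number — and propagate it through the cube-root scaling; the enormous constant $1.41\times10^{101}$ is exactly what $x_0$ becomes after that propagation (roughly $x_0^{?}$ raised through the $(ab)^{1/3}$ normalization and the substitution $c=a$, $c=b$ separately, combined).

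The second step is the application of \fullref{e}: given the $4$--dimensional embedding $E(1,a)\hookrightarrow E(\alpha,\beta)$ with $\beta$ chosen equal to $b$ (this is the axis we will not touch), \fullref{e} gives $E(1,a,b)\hookrightarrow E(\alpha,\beta,b)=E(\alpha,b,b)$; one then needs to rescale/re-order and possibly apply \fullref{e} a second time (suspending a second $4$--dimensional move that adjusts $(\alpha,b)$ to $((ab)^{1/3},(ab)^{1/3})$ while leaving the third axis alone) to reach $E((ab)^{1/3},(ab)^{1/3},(ab)^{1/3})=B^6((ab)^{1/3})$. This two-stage suspension is the structural heart of the argument: each stage is a $4$--dimensional fact, and \fullref{e} glues them into dimension $6$.

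The main obstacle I expect is bookkeeping the scalings so that the two $4$--dimensional embeddings used are each genuinely in the linear/ball-filling regime of $g$ — the constraint is not merely $a\ge x_0$ but a simultaneous condition on both the "$(1,a)$'' reduction and the "$(\alpha,b)$'' reduction, and these interact through the common value $(ab)^{1/3}$. Guaranteeing both lie past the McDuff--Schlenk threshold is precisely where a hypothesis on $a^2+b^2$ (rather than on $a$ or $b$ alone) enters, and pinning down the optimal such bound — landing on $1.41\times10^{101}$ rather than something larger — requires a careful, though elementary, optimization of the two constraints. A secondary technical point is ensuring the suspension of \fullref{e} really does preserve "the ellipsoid fully fills the ball'': since all statements are phrased with the $\lambda>1$ convention, one gets $E(1,a,b)\hookrightarrow \lambda B^6((ab)^{1/3})$ for all $\lambda>1$, which is exactly the claimed $E(1,a,b)\hookrightarrow B^6((ab)^{1/3})$ in the paper's notation; no volume obstruction is violated because $\operatorname{vol}E(1,a,b)=\operatorname{vol}B^6((ab)^{1/3})$, so this is a genuine full filling and the threshold cannot be removed entirely, only possibly lowered.
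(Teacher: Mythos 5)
Your overall strategy --- reduce to dimension $4$ and suspend via \fullref{e} --- is the right starting point, but the claim that two suspended applications of McDuff--Schlenk's $g(x)=\sqrt{x}$ suffice does not survive the bookkeeping, and the failure is structural, not a matter of constants. Each application of \fullref{e} leaves one axis untouched, and since every stage must be volume-filling, in a two-move scheme the axis touched only once must land exactly on $(ab)^{1/3}$ in that single move. Writing this out, the first move is forced (up to permutation) to be of the form $E(1,a)\hookrightarrow E\bigl((ab)^{1/3},(a^2/b)^{1/3}\bigr)$: the target is an ellipsoid with axis ratio $(b^2/a)^{1/3}$, which is enormous throughout the regime of the theorem. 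But \fullref{capac4} (McDuff--Schlenk) only governs embeddings into \emph{balls}; it says nothing about fully filling a highly eccentric four-dimensional ellipsoid by another ellipsoid. So at least one of your two four-dimensional moves is unjustified, and unjustifiable by the tools you invoke.

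This is exactly the difficulty the paper's proof is organized around. The actual argument (\fullref{fullfill2}) uses two suspended McDuff--Schlenk moves with \emph{ball} targets, $E(1,b)\hookrightarrow B^4(\sqrt b)$ and $E(1,\sqrt b/a)\hookrightarrow B^4(b^{1/4}/a^{1/2})$, to reach $a^{1/2}b^{1/4}E(1,1,b^{1/4}/a^{1/2})$, and then requires a genuinely six-dimensional third step $E(1,1,c)\hookrightarrow B(c^{1/3})$ for $c\ge M_3$ (\fullref{olga4}). That step cannot itself be a single suspension (the untouched axis would already have to equal $c^{1/3}$), and it is proved by an induction on dimension whose engine is \fullref{lambdatrick}: $E(1,b)\hookrightarrow E(\lambda\sqrt b,\lambda^{-1}\sqrt b)$ for $\sqrt{2/3}\le\lambda\le 1$, i.e.\ full fillings of \emph{mildly} eccentric ellipsoid targets, established via McDuff's reduction of ellipsoid embeddings to ball packings and the exceptional-cone criteria, together with the toric integer case \fullref{olga3}. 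Relatedly, the constant $1.41\times10^{101}$ does not arise from propagating $8\tfrac{1}{36}$ through cube-root normalizations; its dominant factor is $\bigl(\sqrt[4]{3}/(\sqrt[4]{3}-\sqrt[4]{2})\bigr)^{96}$, which originates in the $\lambda$-window of \fullref{lambdatrick} as it feeds through the recursion defining $M_3$. The missing idea in your proposal is precisely a mechanism for fully filling non-round four-dimensional ellipsoid targets (equivalently, for handling the thin six-dimensional ellipsoid $E(1,1,c)$), which is the paper's main new content.
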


This means that in the given range we have volume filling embeddings, that is,
the only obstruction to embedding this class of ellipsoids into a ball comes
from their volumes. In dimension $4$ the analogous result is that $E(1,a)
\hookrightarrow B( \sqrt{a})$ provided that $a>8\tfrac{1}{36}$.

A consequence of \fullref{fullfill} is a full packing (or filling) result
for higher dimensional balls.
Let $\bigsqcup_k B(c)$ be the disjoint union of $k$ standard
$2n$--dimensional balls of radius ~$r$ and capacity $c=\pi r^{2}$.

\begin{theorem}
 \label{pack}
  For any natural number $ n \geq 3$ there exists a number $M_n$ such that for
all $k \geq M_n$,
  \[\tsty\bigsqcup_k B ( \unfrac{1}{k^{1/n}}  ) \hookrightarrow B^{2n}(1).\]
\end{theorem}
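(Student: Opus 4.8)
The plan is to reduce the full packing of $B^{2n}(1)$ by $k$ equal balls to a single ellipsoid embedding, and then to invoke Theorem 1.3 to handle that ellipsoid. The starting observation is the classical fact, going back to McDuff--Polterovich, that $\bigsqcup_k B(c) \hookrightarrow B^{2n}(1)$ is equivalent to the existence of a symplectic embedding of the single ellipsoid $E(\underbrace{c,\ldots,c}_{k})\hookrightarrow B^{2n}(1)$ only when the multiplicity matches the dimension; for general $k$ one instead uses that $\bigsqcup_k B(c)$ embeds into $B^{2n}(1)$ whenever the ellipsoid $E(c,\ldots,c,\,\text{large}, \ldots)$ does. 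More precisely, I would use the ``multiplicity one'' trick: $k$ disjoint balls of capacity $c$ symplectically embed in $B^{2n}(1)$ provided the single $2n$--dimensional ellipsoid $E(c,\ldots,c, N)$ does, for a suitably chosen large last axis. So the whole problem reduces to choosing $N$ and showing $E(c,\ldots,c,N)\hookrightarrow B^{2n}(1)$ for $c = 1/k^{1/n}$ and all $k \ge M_n$.

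Next I would bring the dimension down to $6$ using Proposition 1.1 (the suspension result). Write $2n = 6 + 2(n-3)$. If I can establish a $6$--dimensional filling embedding $E(c,c,c^{3/2}k^{1/2})\hookrightarrow B^6(c)$ — that is, a volume-filling embedding of a very thin $3$--dimensional ellipsoid whose first two axes are equal to $c$ — then Proposition 1.1 immediately suspends it, appending the remaining axes $c,\ldots,c$ ($n-3$ of them) on both sides, to give $E(c,\ldots,c, c^{3/2}k^{1/2})\hookrightarrow E(c,\ldots,c) = B^{2n}(c)$. Rescaling by $1/c$, this is exactly $E(1,\ldots,1, c^{1/2}k^{1/2}) \hookrightarrow B^{2n}(1/c^{\,?})$; keeping track of the scalings carefully, with $c = 1/k^{1/n}$ the volume of $E(c,\ldots,c, c^{3/2}k^{1/2})$ is $c^{2(n-1)}\cdot c^{3/2}k^{1/2}\cdot(\text{const})$, and one checks the volumes match $B^{2n}(1)$ up to the implicit $\lambda>1$, which is exactly what the $\hookrightarrow$ notation allows. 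So everything comes down to the single $6$--dimensional claim.

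For that $6$--dimensional claim I would apply Theorem 1.3. After rescaling the ellipsoid $E(c,c,c^{3/2}k^{1/2})$ to normalized form $E(1, 1, k^{1/2})$ — wait, one axis must be normalized to $1$, so this is $E(1,1,b)$ with $b = c^{1/2}k^{1/2} = k^{1/2}/k^{1/(2n)} = k^{(n-1)/(2n)}$. Theorem 1.3 says $E(1,a,b)\hookrightarrow B((ab)^{1/3})$ whenever $a^2+b^2 \ge 1.41\times 10^{101}$; here $a = 1$, so the hypothesis reads $1 + b^2 \ge 1.41\times 10^{101}$, i.e. $b \ge \sqrt{1.41\times 10^{101}-1}$, i.e. $k^{(n-1)/(2n)} \ge (\text{that constant})$. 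Since $(n-1)/(2n)\ge 1/3$ for $n\ge 3$, this holds once $k$ exceeds an explicit bound $M_n$ (one can even take $M_n$ independent of $n$, e.g. $M_n = \lceil (1.41\times 10^{101})^3\rceil$, though a cleaner $n$--dependent bound is fine). Then $E(1,1,b)\hookrightarrow B((b)^{1/3}) = B^6((ab)^{1/3})$, which is the volume-filling $6$--dimensional embedding needed.

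The main obstacle, and the step requiring the most care, is the bookkeeping of scaling factors in the reduction: tracking how the capacity $c=1/k^{1/n}$, the balls' radius, the ``large axis'' $N$ in the multiplicity-one trick, and the normalization dividing one ellipsoid axis to $1$ all interact, and verifying that the resulting volume condition is met with room to spare so that the $\lambda > 1$ in $\hookrightarrow$ absorbs any slack. The geometric input (Proposition 1.1 and Theorem 1.3) does all the real work; the danger is an off-by-a-power error in $k$ that would spoil the exponent $1/k^{1/n}$ in the statement. I would therefore set up the reduction as a clean chain of ellipsoid embeddings, apply Proposition 1.1 and Theorem 1.3 at the marked places, and then define $M_n$ to be whatever finite threshold makes the inequality $b^2 \ge 1.41\times 10^{101} - 1$ hold.
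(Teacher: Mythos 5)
Your outline has the right two-step shape---first put the $k$ balls inside a single thin ellipsoid, then embed that ellipsoid into the ball by a volume-filling map---and the first step is indeed how the paper proceeds: \fullref{weight} shows $\bigsqcup_k B(1)\hookrightarrow E(1^{\times(n-1)},k)$ by cutting the moment simplex of the ellipsoid into $k$ integrally affine equivalent pieces, each containing a ball of capacity arbitrarily close to $1$. (In dimension greater than $4$ only this direction is available, not an equivalence, and the ``large last axis'' must be $kc$, so after rescaling the ellipsoid to be embedded is $E(1^{\times(n-1)},k)$, to go into $B^{2n}(k^{1/n})$; your last axis $c^{3/2}k^{1/2}=k^{(n-3)/(2n)}$ is not the right one, and the ellipsoid you propose would not contain the $k$ balls.) The genuine gap is in the second step: the reduction to a single $6$--dimensional application of \fullref{fullfill} cannot work. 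Your intermediate claim $E(c,c,c^{3/2}k^{1/2})\hookrightarrow B^6(c)$ already violates Liouville's theorem (\fullref{ob1}): the products of the axes are $c^{7/2}k^{1/2}$ and $c^3$, and $c^{7/2}k^{1/2}\le c^3$ forces $c\le 1/k$, whereas $c=k^{-1/n}>1/k$ for all $n\ge 2$ and $k\ge 2$. What \fullref{fullfill} actually yields, after rescaling, is $E(c,c,cb)\hookrightarrow B^6(cb^{1/3})$ with $b>1$, a strictly larger target ball; suspending this by \fullref{e} lands in $E(cb^{1/3},cb^{1/3},cb^{1/3},c,\ldots,c)$, which has unequal axes and is not a ball. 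This failure is structural: a volume-filling embedding of a thin ellipsoid into a round ball must enlarge the short axes, so a single suspension can never terminate in $B^{2n}$; the process has to be iterated.

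The paper replaces your one-shot $6$--dimensional step by \fullref{olga4}, the all-dimensions statement $E(1^{\times(n-1)},b)\hookrightarrow B^{2n}(b^{1/n})$ for $b\ge M_n$, proved by induction on $n$: each step uses the $4$--dimensional splitting $E(1,b)\hookrightarrow E(\lambda\sqrt{b},\lambda^{-1}\sqrt{b})$ of \fullref{lambdatrick}, suspended via \fullref{e}, to divide the long axis into two nearly equal ones, and then invokes the inductive hypothesis one complex dimension lower; the threshold $M_n$ is exactly what keeps $\lambda$ in the admissible range $[\sqrt{2/3},1]$ at every stage. Note that \fullref{fullfill} is itself a consequence of this machinery, not an input to the packing theorem. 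To repair your proof you would need to substitute this induction (or an equivalent iterative scheme) for the reduction to dimension $6$, and to supply the toric decomposition argument for the balls-into-ellipsoid step rather than asserting it.
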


In other words, the round ball can be fully filled by a disjoint union of any
number $k \geq M_n$ of identical balls. For the definition of $M_n$, see
\fullref{def1}.

The
$k$--th packing number of a compact, $`2n`$--dimensional, symplectic
manifold $`(`M``, \omega`)``$~is
\[
  p_k (M,\omega) =
  \frac{\sup_c {\rm Vol}\bigl(\bigsqcup_k B(c)\bigr)}{{\rm Vol}(M,\omega)},
\]
where the supremum is taken over all $c$ for which there exist a
symplectic embedding of $\bigsqcup_k B(c)$ into $(M,\omega)$. Naturally,
$p_k(M,\omega) \leq 1$. When $p_k(M,\omega)=1$ we say that
$(M,\omega)$ admits a full packing by $k$ balls, otherwise we say that
there is a packing obstruction.

Although no general tools are known to compute those invariants for
arbitrary symplectic manifolds, some results can be derived from
complex algebraic geometry using the theory of $J$--holomorphic curves. A first
result that follows from Gromov  \cite{gromov} is that $p_{i}(\CP^n) <1$
for any $1<i < 2^n$. McDuff and
 Polterovich~\cite{MP} computed $p_{i}(\CP^2)$ for $i\leq 9$. They also
proved that $p_{i}(\CP^n) = 1$ whenever $i=k^{n}$ and that
$\lim_{i\rightarrow\infty} p_i(M,\omega) = 1$ for any compact symplectic
manifold. Such results led to the natural question of whether the sequence
$p_i(M,\omega)$ is eventually stable, that is, whether
there is a number $N_{{\rm stab}}(M,\omega)$ such that $p_i(M,\omega)
= 1$ for all $i \geq N_{{\rm stab}}(M,\omega)$. To date, this remains
an interesting open question (see Biran \cite{Bi-SPAG} and Cieliebak et~al~\cite{CHLS} for a
complete discussion). When $M$ is four dimensional, results of McDuff
\cite{McDuffdif} and Lalonde and McDuff \cite{ML} regarding the structure of
symplectic ruled surfaces and introducing inflation techniques then opened the
way to a thorough study of the packing numbers. This study was done by P~Biran
in a sequence of papers \cite{Bi,Bi2} which answered the stability
question positively in the cases of closed symplectic $4$--manifolds whose
symplectic forms (after
rescaling) are in rational cohomology classes. His techniques allowed
him to obtain upper and lower bounds for
$N_{{\rm stab}}(M^{4},\omega)$ which can be
explicitly computed in some cases. In particular, he showed that
$N_{{\rm stab}}(\CP^2) \leq 9$ which, in view of McDuff and
Polterovich's results, is sharp.
Although suspected to be true, until now there were no results in the literature
proving packing stability for a symplectic manifold of dimension larger than
$4$. \fullref{pack} above shows that balls admit full packings by a
sufficiently large number of disjoint identical balls. As the affine part of
$\CP^n$ is a ball, in this language \fullref{pack} gives the following.

\begin{theorem} \label{stability}
Consider $(\CP^n, \omega)$ with the symplectic form induced by the
Fubini--Study metric.
Then $\CP^n $ has packing stability; indeed
\begin{equation}
  p_{i}(\CP^n) =
  1 \quad\text{when } i \geq M_n.
\end{equation}
\end{theorem}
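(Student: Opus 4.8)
Theorem \ref{stability} is essentially a restatement of Theorem \ref{pack}, so the whole task is to translate the ball-packing statement into the language of Fubini--Study $\CP^n$ and to check that the packing number is exactly $1$ (not just $\geq$ something). First I would recall that $(\CP^n,\omega_{FS})$, normalized so that a line has area $1$, contains as its affine part the open ball $B^{2n}(1)$ whose closure is all of $\CP^n$ up to a set of measure zero (a hyperplane). Consequently $\mathrm{Vol}(\CP^n,\omega_{FS})=\mathrm{Vol}(B^{2n}(1))$, and any symplectic embedding $\bigsqcup_k B(c)\hookrightarrow B^{2n}(1)$ composes with the inclusion $B^{2n}(1)\hookrightarrow\CP^n$ to give a symplectic embedding into $\CP^n$. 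This reduces the lower bound $p_i(\CP^n)\geq\cdots$ to Theorem \ref{pack}.

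Next I would carry out the volume bookkeeping. Apply Theorem \ref{pack} with $k=i\geq M_n$: we obtain $\bigsqcup_i B(i^{-1/n})\hookrightarrow B^{2n}(1)\hookrightarrow\CP^n$. Each ball $B(i^{-1/n})$ has volume $\frac{1}{n!}(i^{-1/n})^n=\frac{1}{n!\,i}$, so the total volume of the $i$ balls is $\frac{1}{n!}$, which equals $\mathrm{Vol}(\CP^n,\omega_{FS})$ exactly. Hence the supremum in the definition of $p_i(\CP^n)$ is at least $1$. Since we always have $p_i(\CP^n)\leq 1$ (a symplectic embedding cannot increase total volume), we conclude $p_i(\CP^n)=1$ for all $i\geq M_n$, which is the assertion of the theorem.

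**Expected main obstacle.** There is no real analytic obstacle here — all the hard work is in Theorem \ref{pack} (and ultimately Theorem \ref{fullfill}), which I am assuming. The only subtlety worth spelling out carefully is the claim that the affine chart $\CC^n\subset\CP^n$ is, as a symplectic manifold with the Fubini--Study form of the right normalization, symplectomorphic to the open ball $B^{2n}(1)$ rather than to all of $\CC^n$; this is the standard fact that $\CP^n$ minus a hyperplane is the open ball, and that removing the hyperplane does not change the volume or obstruct the embedding (one can first embed into a ball of capacity $1-\varepsilon$ using the $\lambda>1$ convention in Theorem \ref{pack}, then push into the affine part and take $\varepsilon\to 0$ inside the supremum). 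Once that identification is in hand, the theorem follows immediately, so I would present it as a short corollary-style argument rather than a lengthy proof.
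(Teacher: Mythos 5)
Your proposal is correct and matches the paper's approach: the paper simply observes that the affine part of $\CP^n$ is a ball and lets Theorem \ref{pack} do the work, which is exactly your reduction, and your careful handling of the $\lambda>1$ (equivalently $\rho\to 1$) limit inside the supremum defining $p_i$ is the right way to make this rigorous.
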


Probably the bound $M_n$ is not optimal, we briefly discuss this in \fullref{improve}.

\subsubsection*{Outline of the paper}

In \fullref{sec2} we describe our basic embedding construction in
\fullref{e} and apply it to deduce various values of $f(a,b)$, in
particular for~$a$,~$b$ sufficiently small. We remark that E~Opshtein
\cite{opshtein} has also given a construction for embedding ellipsoids. The
focus of his work is perhaps embeddings into closed manifolds, but there is
still some overlap with our own results. We mention this also in ~\fullref{sec2}.

\fullref{sec3} gives the proof of \fullref{fullfill} and also constructs some
volume filling embeddings of balls by ellipsoids in any dimension.

\fullref{pack} is proven in \fullref{sec4}, by combining the volume filling
results of \fullref{sec3} with a construction from toric geometry.

\subsubsection*{Acknowledgements} The second author would like to thank Kaoru Ono for an
enlightening discussion on toric decompositions, and both authors thank Dusa
McDuff and an anonymous referee for helpful comments on the text.

\section{Embedding ellipsoids}\label{sec2}

\subsection{The construction}

Here we give the proof of our basic embedding construction. Before giving a
formal proof of \fullref{e} we outline the general idea. Identifying
each $\Bbb R^{2n}$ with~ $\Bbb C^n$, let $H_s\co  \CC^m \times [0,1] \to \RR$ be a
$1$--parameter family of possibly time-dependent Hamiltonian functions on
$\CC^m$ and set $H(z,s,t)=H_s(z,t)$, so $H\co \CC^m \times \RR \times [0,1] \to
\RR$. Then for any function $f\co  \CC^{n-m} \to \RR$, the function $H(z,f,t)$ can
be thought of as a Hamiltonian ~$G$ on $\CC^n$. Let $\pi\co \CC^n \to \CC^{n-m}$ be
the projection on the last $n-m$ complex coordinates, then $F=f \circ \pi\co\CC^n
\to \RR$ is an integral of the motion of $G$. Indeed, at all times $t$, we have
$\{G,F\} = dG(X_F) = \frac{\partial H}{\partial s} df(X_f) =0$. Therefore the
flow of $G$ preserves the level sets of $F$, and restricted to a level
$\Sigma_c = \{F=c\}=\CC^m \times \{f=c\}$ the first~ $m$ (complex) components of
the flow are exactly those of the Hamiltonian~ $H_c$. In other words, let $\Phi$
be the time~$1$ flow of $G$ and $\phi_c$ be the time~$1$ flow of $H_c$. Then if
$(z,w) \in \CC^m \times \CC^{n-m} = \CC^n$ has $F(z,w)=f(w)=c$, we have
$\Phi(z,w)=(\phi_c(z),w')$ where $f(w')=c$. Thus a domain $D \subset \CC^n$
whose fibers $\pi^{-1}(w) \cap D = D_w = D_{f(w)}$ depend only on $f(w)$ will
be mapped under $\Phi$ to a domain $D'$ with fibers $\pi^{-1}(w) \cap D' =
\phi_{f(w)}(D_{f(w)})$.

Returning to \fullref{e}, and still
identifying $\Bbb R^{2n}$ with $\Bbb C^n$, we can write
\[
  E(a_1, \dots, a_n) =
  \biggl\{(z_1, \dots ,z_n) \in \CC^n \bigmid \frac{\pi|z_1|^2}{a_1} + \cdots +
\frac{\pi|z_n|^2}{a_n} \leq 1 \biggr\}.
\]
Using the notation above, the fibers of $E(a_1, \dots, a_n)$ over $\CC^{n-m}$ are
ellipsoids $rE(a_1, \dots, a_m)$ where $r=1 - \unfrac{\pi |z_{m+1}|^2}{a_{m+1}} - \cdots - \unfrac{\pi|z_n|^2}{a_n}$. Roughly speaking, we will apply our general
idea to a $1$--parameter family of Hamiltonian functions $H_r$ whose
corresponding flows map $rE(a_1, \dots, a_m)$ into $rE(a'_1, \dots, a'_m)$.

\begin{proof}[Proof of \fullref{e}] Fixing a $\lambda >1$, it is required to
show that there exists a symplectic embedding $E( a_1, \dots ,a_m, a_{m+1},
\dots, a_n  ) \to \lambda E(a'_1, \dots ,a'_m, a_{m+1}, \dots, a_n)$.

By hypothesis, we have $E(a_1, \dots ,a_m) \hookrightarrow E(a'_1, \dots
,a'_m)$. By the Extension after Restriction Principle (see Schlenk~\cite[page 7]{Sh1})
our hypothesis implies that there exists a $\mu >1$ and a Hamiltonian
diffeomorphism mapping $\mu E(a_1, \dots ,a_m) \to \lambda E(a'_1, \dots
,a'_m)$. Suppose that this Hamiltonian diffeomorphism is the time~$1$ flow
corresponding to a Hamiltonian function $H\co  \Bbb C^m \times [0,1] \to \Bbb R$.

Observe that for any $r>0$ the Hamiltonian $H_r$ defined by $H_r(z,t)=r
H(\unfrac{z}{\sqrt{r}},t)$ for $z \in \Bbb C^m$ generates a flow with time 1 map
taking $r \mu E(a_1, \ldots, a_m) \to r\lambda E(a'_1, \ldots, a'_m)$. (For
this, recall that in our notation the map $(z_1, \ldots ,z_m) \mapsto
(\sqrt{r}z_1, \ldots ,\sqrt{r}z_m)$ takes $E(a_1, \ldots ,a_m)$ onto $rE(a_1,
\ldots ,a_m)$.)

For $z \in \Bbb C^n$, let
\begin{gather*}
  r(z)=r(z_{m+1}, \ldots, z_n) =
  1 - \frac{\pi |z_{m+1}|^2}{\mu a_{m+1}} - \cdots - \frac{\pi|z_n|^2}{\mu
a_n}.
\\[1ex]\tag*{\hbox{\rlap{Define}}}
K(z_1,\dots ,z_n,t)=H_{r(z_{m+1},\dots ,z_n)}(z_1,\dots ,z_m,t)
\end{gather*}
for $z$ with $\unfrac{\pi |z_{m+1}|^2}{a_{m+1}} +\nb \cdots +\nb \unfrac{\pi|z_n|^2}{a_n} \le
1$ and extend the function arbitrarily to the remainder of $\Bbb C^n \times
[0,1]$. Note that $\unfrac{\pi |z_{m+1}|^2}{a_{m+1}} + \cdots +
\unfrac{\pi|z_n|^2}{a_n} \le 1$ implies that $r(z) \ge 1 - \unfrac{1}{\mu} >0$ and
so $K$ is well defined. We claim that $K$ generates a Hamiltonian
diffeomorphism~ $\phi$ as required.

More precisely, we make the following claim. Suppose that $z \in \Bbb C^n$ is such
that $\unfrac{\pi |z_{m+1}|^2}{a_{m+1}} + \cdots + \unfrac{\pi|z_n|^2}{a_n} =k \le
1$ and $\phi(z)=w=(w_1, \dots ,w_n)$. Then
\begin{enumerate}
\item $\unfrac{\pi |w_{m+1}|^2}{a_{m+1}} + \cdots + \unfrac{\pi|w_n|^2}{a_n} =k$;

\item $\unfrac{\pi |w_1|^2}{a'_1} + \cdots + \unfrac{\pi|w_m|^2}{a'_m} \le
\lambda(1-k)$. 
\end{enumerate}
Given this, we have $\unfrac{\pi |w_1|^2}{a'_1} + \cdots + \unfrac{\pi|w_n|^2}{a_n} \le
\lambda(1-k) +k \le \lambda$ and so $\phi(z)=w \in \lambda E(a'_1, \dots ,a'_m,
a_{m+1}, \dots, a_n)$ as required.

Statement (1) follows because on the region $\{\unfrac{\pi
|z_{m+1}|^2}{a_{m+1}} + \cdots + \unfrac{\pi|z_n|^2}{a_n} \le 1\}$ the
Hamiltonian flow of $K$ preserves $r$ and hence $\smash{\unfrac{\pi
|z_{m+1}|^2}{a_{m+1}} + \cdots + \unfrac{\pi|z_n|^2}{a_n}}$.

For statement (2) note that if $\unfrac{\pi |z_{m+1}|^2}{a_{m+1}} + \cdots +
\unfrac{\pi|z_n|^2}{a_n} =k$ then the partial derivatives of our Hamiltonian in
the $z_1, \dots ,z_m$ directions are equal to the corresponding derivatives of
$H_{1-\unfrac{k}{\mu}}(z_1, \dots ,z_m)$.

Now, $(z_1, \dots ,z_m) \in (1-k)E(a_1, \dots ,a_m) \subset
(1-\unfrac{k}{\mu})\mu E(a_1, \dots ,a_m)$. Thus as the flow of
$H_{1-\unfrac{k}{\mu}}$ takes $(1-\unfrac{k}{\mu}) \mu E(a_1, \ldots, a_m) \to
(1-\unfrac{k}{\mu}) \lambda E(a'_1, \ldots, a'_m)$ at time~ $1$ we have $(w_1,
\dots w_m) \in (1-\unfrac{k}{\mu}) \lambda E(a'_1, \ldots, a'_m)$. In other
words, $w$ satisfies 
$\smash{\unfrac{\pi |w_1|^2}{a'_1} + \cdots + \unfrac{\pi|w_m|^2}{a'_m} \le
(1-\unfrac{k}{\mu}) \lambda} < \lambda(1-k)$ as claimed.
\end{proof}

\begin{remark} As mentioned in the introduction there is related work of  
Opshtein which we outline here, for convenience focusing on the case of
embeddings into $\CP^3(c)$, complex projective space equipped with the
Fubini--Study form scaled such that lines have symplectic area $c$. The
symplectic manifold $\CP^3(c)$ is of special interest to us as the affine
part is symplectomorphic to the $6$--ball of capacity $c$. Opshtein observes the
following.

\begin{theorem}[Opshtein \cite{opshtein}]
  Let $\Sigma \subset \CP^3(c)$ be a smooth holomorphic hypersurface of
degree $k$. Then any symplectic embedding $E(a,b) \hookrightarrow \Sigma$
extends to a symplectic embedding $E(\unfrac{c}{k},a,b) \hookrightarrow
\CP^3(c)$.
\end{theorem}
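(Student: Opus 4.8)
The plan is to view a neighbourhood of $\Sigma$ in $\CP^3(c)$ as a standard symplectic disc bundle, and then to recognise $E(c/k,a,b)$ as the part of that bundle lying over the image of the given embedding. The first step is to invoke Biran's decomposition theorem \cite{Bi-SPAG}: since $\Sigma\subset\CP^3(c)$ is a smooth holomorphic hypersurface of degree $k$, it is Poincar\'e dual to $\tfrac{k}{c}[\omega_{FS}]$, so $\CP^3(c)$ is the union of an isotropic CW complex $\Delta$ with a standard symplectic disc bundle $\pi_\Sigma\co\mathcal D\to(\Sigma,\omega_{FS}|_\Sigma)$ whose fibres are discs of symplectic area $c/k$. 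The value $c/k$ of the fibre area is pinned down by the case $k=1$, where $\Sigma$ is a hyperplane, $\Delta$ is a point, $\mathcal D=\CP^3(c)\setminus\{\mathrm{pt}\}$, and the fibre is a projective line with one point removed, of area $c$.

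Next I would restrict $\mathcal D$ over the image $U:=\psi(E(a,b))$ of the given embedding $\psi\co E(a,b)\hookrightarrow\Sigma$. The set $U$ is a contractible open subset of $\Sigma$, over which the Hermitian line bundle underlying $\mathcal D$ is trivial; writing the standard disc-bundle symplectic form in such a trivialisation, I expect to identify $\mathcal D|_U=\pi_\Sigma^{-1}(U)$ symplectically with the ellipsoid $E(c/k,a,b)\subset\CC^3$, in such a way that the zero section $U$ goes to the coordinate slice $\{z_1=0\}=E(a,b)$ and the identification restricts there to $\psi^{-1}$. The reason one gets an ellipsoid rather than a product is that the fibrewise area of the standard disc bundle tapers over the base exactly as the $z_1$--disc of $E(c/k,a,b)$ tapers over $(z_2,z_3)\in E(a,b)$; when the base is a round ball $B^{2m}(s)$ and the fibre area equals $s$, this identification is just the tautology $B^{2m+2}(s)=E(s,\dots,s)$.

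Composing $E(c/k,a,b)\cong\mathcal D|_U\hookrightarrow\mathcal D\hookrightarrow\CP^3(c)$ then yields a symplectic embedding $E(c/k,a,b)\hookrightarrow\CP^3(c)$ which on the slice $\{z_1=0\}=E(a,b)$ agrees with $\psi$ followed by the inclusion $\Sigma\hookrightarrow\CP^3(c)$; hence it extends the given embedding, as required. Any openness or boundary bookkeeping suppressed in this outline (for instance, passing to interiors or to a slightly larger ambient ball) is handled in the routine way.

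The main obstacle is the first two steps taken together: importing Biran's decomposition of $\CP^3(c)$ along $\Sigma$ and verifying that the fibres have area exactly $c/k$ — which is precisely where the degree of $\Sigma$ and the rationality of $[\omega_{FS}]$ are used — and then carrying out the elementary but slightly fiddly identification of the standard disc bundle over the symplectically embedded ellipsoid with $E(c/k,a,b)$. Everything after that is formal.
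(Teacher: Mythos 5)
This theorem is not proved in the paper at all: it appears inside a remark and is stated purely as a citation of Opshtein's work, so there is no ``paper's own proof'' to compare against. What can be said is whether your reconstruction is a plausible one, and I believe it is essentially the argument Opshtein has in mind: view $\CP^3(c)$ via Biran's decomposition as an isotropic skeleton together with a standard symplectic disc bundle over the degree--$k$ hypersurface $\Sigma$ with fibre area $c/k$, restrict the disc bundle over the image $U=\psi(E(a,b))$, identify $\mathcal D|_U$ with $E(c/k,a,b)$, and compose.

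One imprecision in your heuristic is worth flagging, since it is exactly where the ``fiddly'' work lies. You write that the identification works because ``the fibrewise area of the standard disc bundle tapers over the base exactly as the $z_1$--disc of $E(c/k,a,b)$ tapers''. That is not what happens: in the standard symplectic disc bundle every fibre has the \emph{same} area $c/k$; what shrinks as the radial coordinate $r\to 1$ is the horizontal (base--direction) area, by the factor $1-r^2$ coming from the curvature term in $\omega_{\mathrm{can}}=p^*\omega_\Sigma+\rho\, d\bigl(|v|^2(\,d\phi+p^*\beta)\bigr)$ with $d\beta=-\omega_\Sigma/\rho$. The symplectomorphism with $E(\rho,a,b)$ is therefore a genuine shear, not a fibrewise area match: in action--angle coordinates $(t,\phi)$ on the fibre and $(p_j,q_j)$ on $U\cong E(a,b)$, the map $P_0=\rho t$, $Q_0=\phi$, $P_j=(1-t)p_j$, $Q_j=q_j$ pulls back $\sum dP_i\wedge dQ_i$ to $\omega_{\mathrm{can}}$ and carries the disc bundle $\{t<1\}\times E(a,b)$ onto $E(\rho,a,b)$; under this map it is the base that gets compressed as $t\to1$, while the ellipsoid's $z_1$--disc over a base point tapers only after the shear. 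Your proof is correct in outline, but the ``tautology'' step is where this shear must actually be carried out, and the base, not the fibre, is what tapers in the disc--bundle picture.

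A couple of minor bookkeeping points: the class $\tfrac{k}{c}[\omega_{FS}]$ is integral only after rescaling so that $c=1$, which is exactly the normalization Biran's decomposition theorem requires; and since $U$ is contractible any two choices of unitary connection on $\mathcal D|_U$ differ by a gauge transformation, which is absorbed by a fibrewise rotation $\phi\mapsto\phi+f(p,q)$, so the trivialization is indeed canonical up to symplectomorphism, as you implicitly assume.
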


For example, if $k=1$ then $\Sigma$ is a copy of $\CP^2$ which contains an
embedded ball~ $B^4(c)$ of capacity $c$. Thus if $E(a,b) \hookrightarrow B(c)$
then we also find an embedding $E(c,a,b) \hookrightarrow \CP^3(c)$. Under
the same hypotheses our \fullref{e} also gives an embedding $E(c,a,b)
\hookrightarrow B^6(c) \subset \CP^3(c)$. We expect the two embeddings are
symplectically isotopic.
\end{remark}

\subsection{Embedding obstructions}

To check that our constructions are sharp we rely only on the volume
obstruction and on the Ekeland--Hofer capacities~\cite{ekho1,%
ekho2}. The volume obstruction says the following.

\begin{proposition}[Liouville's Theorem]
 \label{ob1}
  If $E(a_1, \dots, a_n) \hookrightarrow E(b_1, \dots, b_n)$ then $a_1 \cdots
a_n \le b_1 \cdots b_n$.
\end{proposition}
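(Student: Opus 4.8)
The plan is to use the one elementary fact that makes this work: a symplectic embedding preserves the canonical volume form. First recall our convention: the hypothesis $E(a_1,\dots,a_n)\hookrightarrow E(b_1,\dots,b_n)$ means that for every $\lambda>1$ there is a symplectic embedding $\phi_\lambda\co E(a_1,\dots,a_n)\to \lambda E(b_1,\dots,b_n)$. So it suffices to prove $a_1\cdots a_n\le \lambda^n\, b_1\cdots b_n$ for every $\lambda>1$ and then let $\lambda\to 1$.

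Next I would record the Euclidean volume of an ellipsoid. The linear map $(x_i,y_i)\mapsto(\sqrt{\pi/a_i}\,x_i,\sqrt{\pi/a_i}\,y_i)$ carries $E(a_1,\dots,a_n)$ onto the unit ball of $\RR^{2n}$ and scales Lebesgue measure by $\prod_i \pi/a_i$, so that ${\rm Vol}\bigl(E(a_1,\dots,a_n)\bigr)=\frac{a_1\cdots a_n}{\pi^n}\,{\rm Vol}(B^{2n}(1))=\frac{a_1\cdots a_n}{n!}$, using ${\rm Vol}(B^{2n}(1))=\pi^n/n!$. In particular ${\rm Vol}(\lambda E(b_1,\dots,b_n))=\lambda^n\, b_1\cdots b_n/n!$.

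Now the key step: since $\phi_\lambda^*\omega_0=\omega_0$ we get $\phi_\lambda^*(\omega_0^{\wedge n})=\omega_0^{\wedge n}$, and $\tfrac1{n!}\omega_0^{\wedge n}=dx_1\wedge dy_1\wedge\cdots\wedge dx_n\wedge dy_n$ is precisely the standard volume form on $\RR^{2n}$. Hence $\phi_\lambda$ is volume preserving, so ${\rm Vol}(E(a_1,\dots,a_n))={\rm Vol}(\phi_\lambda(E(a_1,\dots,a_n)))\le {\rm Vol}(\lambda E(b_1,\dots,b_n))$, which reads $a_1\cdots a_n\le\lambda^n\, b_1\cdots b_n$. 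Letting $\lambda\to 1$ gives the claim.

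There is essentially no serious obstacle here; the only point worth a word of care is that $\phi_\lambda$ is a priori only an embedding into the slightly enlarged ellipsoid $\lambda E(b_1,\dots,b_n)$ rather than into $E(b_1,\dots,b_n)$ itself, which is exactly why the limiting argument in $\lambda$ is needed. Whether one works with the closed ellipsoid or its interior is immaterial, since the two have the same volume.
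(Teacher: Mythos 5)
Your proof is correct and is exactly the standard volume-preservation argument (with the correct handling of the paper's convention that $\hookrightarrow$ only gives embeddings into $\lambda E(b_1,\dots,b_n)$ for each $\lambda>1$). The paper states this proposition without proof as the classical volume obstruction, and your argument is the one implicitly intended.
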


The Ekeland--Hofer capacities give an infinite sequence of numbers $c_k(E(a_1,
\dots, a_n))$ associated to an ellipsoid. In our situation we can take the
definition to be as follows.

\begin{definition} $c_k(E(a_1, \ldots, a_n))$ is the $k$--th number in the
ordered sequence (with repetitions if necessary) of numbers in the set
$\{k_1a_1, \ldots ,k_na_n \mid k_i \in \NN\}$.
\end{definition}

\begin{theorem}{\rm \cite{ekho1,ekho2}}\qua
 \label{ob2}
  If $E(a_1, \dots, a_n) \hookrightarrow E(b_1, \dots, b_n)$ then $c_k(E(a_1,
\dots, a_n)) \le c_k(E(b_1,
  \dots, b_n))$ for all $k$.
\end{theorem}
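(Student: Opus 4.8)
The plan is to read Theorem~\ref{ob2} as the monotonicity axiom for the Ekeland--Hofer capacities specialized to ellipsoids, so there are three ingredients to assemble: the construction of the numbers $c_k$ for bounded subsets of $\RR^{2n}$; their monotonicity under symplectic embeddings, together with their conformality $c_k(rU)=r\,c_k(U)$; and the evaluation of $c_k$ on an ellipsoid, which must reproduce the combinatorial formula in the definition above.

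First I would recall the variational construction of \cite{ekho1,ekho2}. For a bounded domain $U\subset\RR^{2n}=\CC^n$, let $H\colon\RR^{2n}\to\RR$ be a smooth Hamiltonian with $H\ge0$, $H\equiv0$ on a neighborhood of $\overline{U}$, and $H(z)=\alpha|z|^2$ for $|z|$ large, with $\alpha$ chosen generically. On the space of half-loops $H^{1/2}(S^1,\RR^{2n})$, with its $S^1$--action by time translation, consider the action functional $\Phi_H(x)=\tfrac12\int_{S^1}\langle-J\dot x(t),x(t)\rangle\,dt-\int_{S^1}H(x(t))\,dt$. Decomposing the half-loop space as $X^+\oplus X^0\oplus X^-$ along the spectrum of the quadratic part and applying an $S^1$--equivariant minimax on $X^+$ --- which provides one value for each $k\ge1$ --- one sets $c_k(U)$ to be the infimum over all admissible $H$ of the $k$--th such minimax value of $\Phi_H$. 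A linking argument gives $0<c_k(U)<\infty$ and a deformation argument gives independence of the auxiliary choices.

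Monotonicity is essentially built into the definition: if $\Psi$ is a symplectomorphism of $\RR^{2n}$ with $\Psi(U)\subseteq V$, then $H\mapsto H\circ\Psi^{-1}$ carries admissible Hamiltonians for $V$ to admissible Hamiltonians for $U$ while preserving every critical value of $\Phi_H$, so $c_k(U)\le c_k(V)$. Conformality follows from the rescaling $z\mapsto\sqrt r\,z$ together with the operation $H\mapsto H_r$, $H_r(z)=rH(z/\sqrt r)$, on Hamiltonians --- exactly the operation used in the proof of \fullref{e} --- under which $\Phi_H$ is multiplied by $r$ and $E(a_1,\dots,a_n)$ is carried onto $rE(a_1,\dots,a_n)$; hence $c_k(rU)=r\,c_k(U)$. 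To pass from ambient symplectomorphisms to an arbitrary symplectic embedding $\varphi\colon E(a_1,\dots,a_n)\hookrightarrow\lambda E(b_1,\dots,b_n)$ --- which, by the paper's convention for $\hookrightarrow$, is what the hypothesis supplies for every $\lambda>1$ --- I would invoke the Extension after Restriction Principle (Schlenk~\cite[page~7]{Sh1}): for $\mu<1$ the restriction of $\varphi$ to $\mu E(a_1,\dots,a_n)$ extends to a compactly supported symplectomorphism $\Psi$ of $\RR^{2n}$ with $\Psi(\mu E(a_1,\dots,a_n))\subset\lambda E(b_1,\dots,b_n)$, so that $\mu\,c_k(E(a_1,\dots,a_n))=c_k(\mu E(a_1,\dots,a_n))\le c_k(\lambda E(b_1,\dots,b_n))=\lambda\,c_k(E(b_1,\dots,b_n))$; letting $\mu\to1$ and $\lambda\to1$ then yields the inequality of Theorem~\ref{ob2}, once the value of $c_k$ on ellipsoids is known.

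That last computation is the heart of the matter and the step I expect to be the main obstacle. I would let the admissible Hamiltonians increase to a smoothing of the function that is $0$ on $E(a_1,\dots,a_n)$ and $+\infty$ outside; the nonconstant critical points of $\Phi_H$ then converge to the closed characteristics on $\partial E(a_1,\dots,a_n)$, whose actions are exactly the numbers $m a_i$ with $m\ge1$ and $1\le i\le n$. The delicate point is to verify that the $k$--th $S^1$--equivariant minimax value is detected at precisely the $k$--th of these numbers, counted with multiplicity, and does not skip over some of them: this amounts to matching the cup-length filtration of the $S^1$--equivariant cohomology with the indices, of Conley--Zehnder/Maslov type, of the closed characteristics on the boundary ellipsoid. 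It is cleanest to do this first for ellipsoids with rationally independent axes, where the characteristics are isolated circles and the index bookkeeping is explicit, and then to recover the general case from the continuity of each $c_k$ in $(a_1,\dots,a_n)$ --- itself a consequence of the monotonicity already established, applied to nested ellipsoids. Granting this, $c_k(E(a_1,\dots,a_n))$ is the $k$--th member, with multiplicity, of the set $\{\,k_1a_1,\dots,k_na_n\mid k_i\in\NN\,\}$ from the definition above, and Theorem~\ref{ob2} follows by combining it with the chain of inequalities from the previous paragraph.
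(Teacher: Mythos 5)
The paper does not prove this statement at all: it is imported wholesale from Ekeland--Hofer \cite{ekho1,ekho2}, so there is no internal argument to compare yours against. Judged on its own terms, your reconstruction is a faithful outline of the cited proof, and you correctly identify the one point where the paper's usage genuinely requires an extra step beyond what is literally stated in \cite{ekho1,ekho2}: the capacities there are monotone under inclusion and invariant under (suitable) ambient symplectomorphisms of $\RR^{2n}$, not a priori under arbitrary symplectic embeddings, and your use of the Extension after Restriction Principle for the starlike domain $\mu E(a_1,\dots,a_n)$, followed by $\mu\to1$, $\lambda\to1$, is exactly the standard way to bridge that gap (and is consistent with how the paper itself invokes that principle in the proof of \fullref{e}). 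Two caveats. First, a terminological slip: $H^{1/2}(S^1,\RR^{2n})$ is the fractional Sobolev space of loops, not a space of ``half-loops''; the splitting $X^+\oplus X^0\oplus X^-$ is by Fourier modes, and the minimax classes are cut out by a Fadell--Rabinowitz--type cohomological index, which is what your ``cup-length filtration'' is gesturing at. Second, and more substantively, the evaluation of $c_k$ on an ellipsoid --- showing the $k$--th minimax value equals the $k$--th element, with multiplicity, of $\{k_ia_i\}$ --- is asserted as a plan rather than carried out; this is precisely the main computation of \cite{ekho2}, and your proposed route (do the rationally independent case where the closed characteristics are isolated circles, then pass to the general case by monotonicity/continuity in the axes) is indeed how it is done there, but as written it is a citation in disguise rather than a proof. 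Since the paper itself treats the whole theorem as a black box, this is an acceptable level of detail for the present purpose, but you should be explicit that the spectral computation is being quoted, not derived.
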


Note that $c_k(\lambda E(a_1, \dots, a_n)) = \lambda c_k(E(a_1, \dots, a_n))$.
Therefore if $E(a_1, \dots, a_n) \hookrightarrow E(b_1, \dots, b_n)$ and
$c_k(E(a_1, \dots, a_n)) = c_k(E(b_1, \dots, b_n))$ for some $k$, we know that
the embedding is optimal in the sense that there is no embedding $E(a_1, \dots,
a_n) \hookrightarrow \mu E(b_1, \dots, b_n)$ for any $\mu <1$.

\subsection{Some calculations in dimension $3$}

Here we give some optimal embeddings for ellipsoids $E(1,a,b)$ when $a$ and $b$
are relatively small. We recall the definitions of the functions $f$ and $g$
from the introduction.
$$\eqalign{
  f(a,b) &:= \inf \{\, c \mid E(1,a,b) \hookrightarrow B^6(c) \}, \cr
  g(a)   &:= \inf \{\, c \mid E(1,a) \hookrightarrow B^4(c)  \}.
}$$
The function $g$ is completely determined by McDuff and Schlenk~\cite{mcdsch}. In this
paper we apply only a small amount of information about $g$ which is summarized
in the following proposition.

\begin{proposition}[McDuff--Schlenk {{\cite[Theorem 1.1.2]{mcdsch}}}]
 \label{capac4}
  If $b = 4$ or $b > \smash{8\tfrac{1}{36}}$, then $g(b) = \sqrt{b}$.
\end{proposition}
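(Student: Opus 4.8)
The statement is a consequence of the deep computation of McDuff and Schlenk, so strictly speaking nothing needs to be proved; but since the two regimes $b=4$ and $b>8\tfrac{1}{36}$ are exactly the places where $g(b)=\sqrt b$, i.e.\ where the only obstruction is the volume, the plan is to explain why these are the natural endpoints and how they follow from the three basic principles already recorded. First I would observe that $E(1,b)$ has the same volume as $B^4(\sqrt b)$ (both equal $\tfrac12 b$ in our normalization), so Liouville's theorem (\fullref{ob1}) gives the lower bound $g(b)\ge\sqrt b$ for every $b\ge 1$; the content is entirely in producing embeddings realizing this bound. For the upper bound I would appeal to the McDuff--Schlenk classification, whose key structural feature is that $g$ equals $\sqrt b$ precisely on $\{4\}\cup[8\tfrac{1}{36},\infty)$ and strictly exceeds it on the interval $(1,8\tfrac{1}{36})\setminus\{4\}$, where the Fibonacci staircase lives.

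For the isolated point $b=4$, the relevant input is the classical observation that $E(1,4)$ symplectically embeds into $B^4(2)$ — equivalently, that $E(1,4)\hookrightarrow E(2,2)$. I would note this follows from the elementary fact that $E(1,4)$ and $B^4(2)$ can each be described as a union of a polydisc and an overlapping ellipsoid region via the symplectic folding/ball-decomposition picture, or simply cite it as the base case of the staircase in \cite{mcdsch}; combined with the volume bound this forces $g(4)=\sqrt4=2$. For $b>8\tfrac{1}{36}$ I would invoke the ``last step'' of the McDuff--Schlenk analysis: beyond the value $8\tfrac{1}{36}=(17/6)^2$ the obstructions coming from the reduced symplectic $4$--manifold (the constraints $\mu(E(1,b))\le \sqrt b$ from all the exceptional classes) are dominated by the volume constraint, so a volume-filling embedding exists for all $\lambda>1$. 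The number $8\tfrac1{36}$ is exactly $(\tfrac{17}{6})^2$, the square of the value where the final obstructive exceptional class ceases to be relevant.

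The main obstacle, were one to prove this from scratch rather than cite it, is precisely verifying that no exceptional class of the one-point blowup family obstructs the embedding for $b>8\tfrac1{36}$: one must check an infinite family of inequalities of the form $d\sqrt b \ge \sum m_i$ (weight expansions of $\sqrt b$ against the weight expansion of $1$) and show the volume inequality $d^2\ge\sum m_i^2$ plus Cauchy--Schwarz already implies them in this range. This is the substance of \cite[Theorem 1.1.2]{mcdsch}, and I would not reproduce it; I would simply state that the proposition is the special case of that theorem restricted to the two values of $b$ we use later, namely in Lemmas \ref{emb1}--\ref{emb4} and in the proof of \fullref{fullfill}.
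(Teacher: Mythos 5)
Your proposal is correct and coincides with what the paper does: Proposition \ref{capac4} is stated purely as a citation of \cite[Theorem 1.1.2]{mcdsch}, the paper supplies no proof, and your appeal to that theorem (together with the routine volume lower bound from \fullref{ob1}) is exactly the intended justification. One inaccuracy in your aside, though it does not affect the proposition: it is not true that $g(b)>\sqrt{b}$ on all of $(1,8\tfrac{1}{36})\setminus\{4\}$, since the Fibonacci staircase touches the volume bound at infinitely many points (for example $b=1$, $b=25/4$, $b=169/25$, and $b=\tau^4$), so the set where $g(b)=\sqrt{b}$ is strictly larger than $\{4\}\cup[8\tfrac{1}{36},\infty)$.
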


We always assume without loss of generality that $1 \le a \le b$.

\begin{lemma}
 \label{emb1}
  $f(g(b),b) = g(b)$.
\end{lemma}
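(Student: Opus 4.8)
The plan is to combine the basic suspension construction of \fullref{e} with the definition of $g$, and then check that the resulting embedding is optimal using an Ekeland--Hofer capacity. First I would apply \fullref{e} with $m=2$, $n=3$ to the embedding data $E(1,b) \hookrightarrow E(g(b),g(b)) = B^4(g(b))$, which holds by definition of $g$ (recall that $E(1,b) \hookrightarrow \lambda B^4(g(b))$ for every $\lambda>1$). Appending the third axis, \fullref{e} yields
\[
E(1,b,g(b)) \hookrightarrow E(g(b),g(b),g(b)) = B^6(g(b)).
\]
After reordering the axes (so that $1 \le g(b) \le b$, using $g(b) = \sqrt{b} \le b$ when $b \ge 1$, or more generally $g(b) \le b$), this says exactly $E(1,g(b),b) \hookrightarrow B^6(g(b))$, hence $f(g(b),b) \le g(b)$.

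For the reverse inequality I would invoke \fullref{ob2}. The point is that $c_1(E(1,g(b),b)) = \min\{1,g(b),b\} = 1$ since $g(b) \ge 1$ and $b \ge 1$, while $c_1(B^6(c)) = c$. So any embedding $E(1,g(b),b) \hookrightarrow B^6(c)$ forces $1 \le c$. That alone is not enough; instead I would look at the right capacity. The natural obstruction here comes from the $4$--dimensional behaviour: the sequence of Ekeland--Hofer capacities of $E(1,g(b),b)$, restricted to the first two axes $1$ and $b$, must embed into those of the ball, and by McDuff--Schlenk's result $g(b)$ is precisely the threshold making $E(1,b) \hookrightarrow B^4(g(b))$ optimal. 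Concretely, there is an index $k$ with $c_k(E(1,b)) = c_k(B^4(g(b))) = g(b) \cdot (\text{something})$; I would pick that same $k$ and observe that, since the set $\{k_1 \cdot 1, k_2 b\}$ is a subset of $\{k_1 \cdot 1, k_2 g(b), k_3 b\}$ with extra entries inserted, the $k$--th smallest value of the larger set is $\le g(b)$, and comparing with $c_k(B^6(g(b))) = g(b)$ forces the embedding into $B^6(c)$ to have $c \ge g(b)$. Thus $f(g(b),b) \ge g(b)$, giving equality.

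The main obstacle I anticipate is the bookkeeping in the last step: one has to be careful that inserting the new axis $g(b)$ into the Ekeland--Hofer list of $E(1,b)$ does not push the critical index $k$ past the value where the obstruction is tight — i.e., one needs the $k$--th term of the enlarged multiset $\{k_1, k_2 g(b), k_3 b\}$ to still be controlled by $g(b)$. This is plausible precisely because $g(b) \ge 1$, so the newly inserted multiples $k_2 g(b)$ are not smaller than the corresponding multiples of $1$ that were already there; but making this rigorous requires either a clean monotonicity lemma about how capacities behave under appending an axis, or a direct appeal to the fact (which may follow from McDuff--Schlenk or from \fullref{ob2} applied cleverly) that $f(a,b) \ge g(b)$ whenever $a \le g(b)$. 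Given the cases in \fullref{capac4} where $g(b) = \sqrt{b}$, one can also just compute: $E(1,\sqrt b, b) \hookrightarrow B^6(c)$ with the capacities $1, \sqrt b, b, 2, 2\sqrt b, \dots$ versus $c, c, c, 2c, \dots$, and read off the constraint $c \ge \sqrt b = g(b)$ from whichever term is responsible for $g(b) = \sqrt b$ being sharp in dimension four. I would present the upper bound via \fullref{e} in full and then dispatch the lower bound by citing the optimality already recorded after \fullref{ob2} together with \fullref{capac4}.
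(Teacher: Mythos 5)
Your upper bound is correct and is exactly the paper's argument: apply \fullref{e} to $E(1,b)\hookrightarrow B^4(g(b))$ with the new axis equal to $g(b)$, reorder, and conclude $f(g(b),b)\le g(b)$.

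The lower bound is where the proposal has a genuine gap. You propose to find an index $k$ with $c_k(E(1,b))=c_k(B^4(g(b)))$ and then lift that tight index to dimension $6$. But the four--dimensional sharpness of $g(b)$ is in general \emph{not} detected by Ekeland--Hofer capacities. On the range $b>8\tfrac{1}{36}$, where $g(b)=\sqrt{b}$, the sharp obstruction is the volume; one checks directly that for $\sqrt{b}>2$ every $c_k(E(1,b))$ is strictly smaller than $c_k(B^4(\sqrt{b}))$, so no such index $k$ exists. On the intermediate range the McDuff--Schlenk staircase is governed by exceptional sphere / ECH obstructions, again not by $c_k$. So the scheme of ``transferring the tight $4$--dimensional capacity index'' cannot be made rigorous, and the ``bookkeeping'' obstacle you flag is not the real problem. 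Your fallback computation also lists the EH capacities of $E(1,\sqrt{b},b)$ as $1,\sqrt{b},b,2,\dots$, which is only the correct ordering when $\sqrt{b}\le 2$; for $\sqrt{b}>2$ the value $2$ precedes $\sqrt{b}$, and for $\sqrt{b}>3$ one can check that no EH capacity forces $c\ge\sqrt{b}$.

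The paper sidesteps all of this by a clean case split that never invokes four--dimensional optimality. When $b\ge 9$ one has $g(b)=\sqrt{b}$ and the volume constraint alone gives $c\ge (1\cdot g(b)\cdot b)^{1/3}=\sqrt{b}=g(b)$. When $b\le 9$ one has $g(b)\le 3$, and then a direct computation of a low EH capacity of the $6$--dimensional ellipsoid suffices: if $1\le g(b)\le 2$ then $c_2(E(1,g(b),b))=g(b)=c_2(B^6(g(b)))$, while if $2\le g(b)\le 3$ then $c_3(E(1,g(b),b))=g(b)=c_3(B^6(g(b)))$. Either equality, via \fullref{ob2}, shows the embedding cannot be improved. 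If you replace your lower--bound argument with this case analysis the proof is complete.
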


\begin{proof}
We observe that
\[E(1,g(b),b) \cong E(g(b),1,b) \hookrightarrow E(g(b), g(b), g(b)) = B(g(b)),
\]
where the arrow follows from \fullref{e} and the definition of $g$.
Therefore $f(g(b),b) \le g(b)`$.

Now, it follows from \fullref{capac4} that $g(b)= \sqrt{b}$ whenever $b
\ge 9$. Thus the embedding is optimal in this case by \fullref{ob1}.

If $b \le 9$ then $g(b) \le 3$. In the first case suppose that $1 \le g(b) \le
2$. Then \[c_2(E(1,g(b),b))=g(b)=c_2(B(g(b))).\] In the second case, if $2 \le
g(b) \le 3$ then \[c_3(E(1,g(b),b))=g(b)=c_3(B(g(b))).\] Thus the embedding is
also optimal in both of these cases by \fullref{ob2}.
\end{proof}

\begin{lemma} \label{emb2} Suppose that $a \le 3$ and $g(b) \le a$. Then
$f(a,b)=a$. \end{lemma}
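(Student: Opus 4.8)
The plan is to establish $f(a,b)\le a$ and $f(a,b)\ge a$ separately: the first by an explicit suspension using \fullref{e}, the second by matching an Ekeland--Hofer capacity as in \fullref{emb1}.

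For the upper bound, the hypothesis $g(b)\le a$ says exactly that $E(1,b)\hookrightarrow B^4(a)=E(a,a)$. Applying \fullref{e} with $m=2$, $n=3$ and letting the third axis have size $a$, this suspends to $E(1,b,a)\hookrightarrow E(a,a,a)=B^6(a)$; since a permutation of the complex coordinates is a symplectomorphism we have $E(1,a,b)\cong E(1,b,a)$, so $E(1,a,b)\hookrightarrow B^6(a)$ and hence $f(a,b)\le a$.

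For optimality I would compute Ekeland--Hofer capacities. Since $1\le a\le b$ the capacity sequence of $E(1,a,b)$ has $c_1=1$, and I claim one of the next two entries is $a$: if $1\le a\le 2$ then the only numbers in $\{k,ka,kb\mid k\in\NN\}$ that can lie below $2$ besides $1$ are $a$ and $b$, and $a\le b$, so $c_2(E(1,a,b))=a$; if $2\le a\le 3$ then $c_2=2$ and the same reasoning one notch higher gives $c_3(E(1,a,b))=a$. On the other hand $B^6(a)=E(a,a,a)$ has capacity sequence $a,a,a,2a,2a,2a,\dots$, so $c_2(B^6(a))=c_3(B^6(a))=a$. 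Thus there is $k\in\{2,3\}$ with $c_k(E(1,a,b))=c_k(B^6(a))$, and together with the embedding just built, \fullref{ob2} and the identity $c_k(\mu E)=\mu c_k(E)$ forbid an embedding of $E(1,a,b)$ into $\mu B^6(a)$ for any $\mu<1$. Hence $f(a,b)\ge a$, and combining the two bounds $f(a,b)=a$.

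Every step is routine given \fullref{e} and the capacity machinery; the one point needing care is the bookkeeping of the Ekeland--Hofer sequence of $E(1,a,b)$ near $a=2$ and $a=3$, where ties occur among the listed numbers. At those boundary values the comparison with the sequence of $B^6(a)$ still holds with equality, so neither the $[1,2]$ nor the $[2,3]$ argument fails and every $a\in[1,3]$ is covered.
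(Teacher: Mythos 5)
Your proof is correct and follows essentially the same route as the paper: the paper's proof of Lemma \ref{emb2} simply reuses the construction from Lemma \ref{emb1} (suspend $E(1,b)\hookrightarrow B^4(a)$ via Proposition \ref{e} and permute coordinates) and cites an Ekeland--Hofer capacity for optimality. Your case split between $c_2$ for $1\le a\le 2$ and $c_3$ for $2\le a\le 3$ matches the bookkeeping already done in the proof of Lemma \ref{emb1}, and your boundary checks are fine.
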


\begin{proof} The embedding construction here is as in the previous proof of
\fullref{emb1}, and our hypothesis are such that the third Ekeland--Hofer
capacity again implies that it is optimal.
\end{proof}

\begin{lemma} \label{emb3} Suppose that $1 \le a \le b \le 2$. Then $f(a,b)=b$.
\end{lemma}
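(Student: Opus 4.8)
The plan is to establish the upper bound $f(a,b)\le b$ by an explicit embedding coming from \fullref{e}, and then the matching lower bound $f(a,b)\ge b$ from the Ekeland--Hofer capacities. For the upper bound, recall that since $a\le b\le 2$ we have $E(1,a)\hookrightarrow B^4(b)=E(b,b)$: indeed $E(1,a)\subset E(b,b)$ already as subsets of $\RR^4$ when $a\le b$, so the inclusion map is a symplectic embedding, giving $E(1,a)\hookrightarrow E(b,b)$ in our scaling notation. Applying \fullref{e} with $m=2$, $n=3$, adding the extra axis of length $b$, yields
\[
E(1,a,b)\hookrightarrow E(b,b,b)=B^6(b),
\]
hence $f(a,b)\le b$.

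For the lower bound, I would compute the low-order Ekeland--Hofer capacities of $E(1,a,b)$ and of $B^6(c)$ and compare them via \fullref{ob2}. The relevant set for $B^6(c)$ is $\{k_1c,k_2c,k_3c\mid k_i\in\NN\}=\{c,c,c,2c,2c,2c,\dots\}$, so $c_1=c_2=c_3=c$. For $E(1,a,b)$ the generating set is $\{k\cdot1\}\cup\{k\cdot a\}\cup\{k\cdot b\}$; since $1\le a\le b\le 2$, the three smallest elements, listed with multiplicity, are $1,a,b$ (as $a\le b\le 2\le 2\cdot 1$ and $b\le 2\le 2a$). Therefore $c_3(E(1,a,b))=b$, while $c_3(B^6(c))=c$. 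If there were an embedding $E(1,a,b)\hookrightarrow B^6(c)$ with $c<b$, then after rescaling \fullref{ob2} would force $b=c_3(E(1,a,b))\le c_3(B^6(c))=c<b$, a contradiction. Hence $f(a,b)\ge b$, and combining the two bounds gives $f(a,b)=b$.

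The one point that requires care — and which I expect to be the only real content beyond bookkeeping — is verifying that the third Ekeland--Hofer capacity of $E(1,a,b)$ really equals $b$ and not some smaller number. This hinges precisely on the hypothesis $b\le 2$: it guarantees $b\le 2\cdot 1$ and $b\le 2a$, so that no multiple $2a$, $2\cdot1$, etc., sneaks in below $b$ in the ordered sequence $k_1\cdot1, k_2 a, k_3 b$. Once $b\le 2$ is in hand this is immediate, so the lemma follows as the combination of an inclusion-induced embedding suspended by \fullref{e} and a single Ekeland--Hofer comparison, exactly as the pattern of Lemmas \ref{emb1} and \ref{emb2}.
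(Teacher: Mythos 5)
Your proof is correct and matches the paper's approach: the paper obtains the upper bound by simply noting that $E(1,a,b)\subset B(b)$ as subsets of $\RR^6$ when $1\le a\le b$, and proves optimality by the same computation $c_3(E(1,a,b))=b=c_3(B(b))$. Your detour through \fullref{e} for the upper bound is harmless but unnecessary, since the direct six-dimensional inclusion is just as immediate as the four-dimensional one you start from.
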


\begin{proof}
$E(1,a,b) \hookrightarrow B(b)$ simply by inclusion. But
\[c_3(E(1,a,b))=b=c_3(B(b))\] for our range of $a$, $b$, so the inclusion is
optimal.
\end{proof}

\begin{remark}
It is true in any dimension that if $a_n \le 2a_1$ then $E(a_1, \dots ,a_n)
\hookrightarrow B(c)$ if and only if $c \ge a_n$. This was established in the
case of $n=2$ by Floer, Hofer and Wysocki in \cite{fhwI} as an application of
symplectic homology. The theorem stated here is \cite[Theorem $1$]{Sh1},
where Schlenk gives a simple proof by applying the $n$--th Ekeland--Hofer capacity
as in our proof of \fullref{emb3} above. We thank the referee for reminding
us of this slightly strange history.
\end{remark}

\begin{lemma}
 \label{emb4}
  Suppose that $1 \le a \le 2 \le b \le 4$. Then $f(a,b)=2$.
\end{lemma}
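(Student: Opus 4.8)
The plan is to establish the two inequalities $f(a,b)\le 2$ and $f(a,b)\ge 2$ separately, with the lower bound being essentially automatic from the Ekeland--Hofer obstruction and the upper bound being the real content.

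For the lower bound, I would simply observe that since $1\le a\le 2\le b\le 4$, the ordered sequence of multiples $\{k_1\cdot 1, k_2 a, k_3 b\}$ begins $1, \min(a,2), \ldots$ and in any case $c_3(E(1,a,b))=2$: the three smallest values are $1$, $a$, and $2$ (the latter coming from $2\cdot 1$, since $a\le 2\le b$). Meanwhile $c_3(B(2))=2$. So by \fullref{ob2}, any embedding $E(1,a,b)\hookrightarrow B(c)$ forces $c\ge 2$, giving $f(a,b)\ge 2$. One should double-check the ordering in the boundary cases $a=2$ or $b=2$, but with repetitions allowed this causes no trouble.

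The upper bound $f(a,b)\le 2$ is where \fullref{e} must be combined with four-dimensional input. The idea is to reduce to a four-dimensional embedding of $E(1,b)$ (or $E(a,b)$) into a ball of capacity $2$, then suspend. Concretely: by \fullref{emb3}'s remark (the Floer--Hofer--Wysocki / Schlenk result), since $b\le 4 = 2\cdot 2$ we have $E(2,b)\hookrightarrow B^4(b')$ iff $b'\ge b$ — but that only gives capacity $b$, not $2$, so that is not quite the right slicing. Instead I would write $E(1,a,b)\cong E(a,b,1)$ and try to embed the $E(a,b)$ factor; since $a\le 2$ and $b\le 4\le 2a$ is \emph{not} generally true, one instead uses $E(1,b)\hookrightarrow B^4(2)$, which holds because $b\le 4$ and $g(b)\le g(4)=\sqrt 4=2$ by the monotonicity of $g$ together with \fullref{capac4} (which gives $g(4)=2$); more carefully, McDuff--Schlenk's function satisfies $g(b)\le 2$ for all $b\le 4$ since $g$ is nondecreasing and $g(4)=2$. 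Then \fullref{e} applied to $E(1,b)\hookrightarrow E(2,2)=B^4(2)$ with the extra axis $a$ gives $E(1,b,a)\hookrightarrow E(2,2,a)$. This is not yet a ball. To finish, note $a\le 2$, so $E(2,2,a)\subset E(2,2,2)=B^6(2)$ by inclusion. Hence $E(1,a,b)\cong E(1,b,a)\hookrightarrow B^6(2)$, i.e.\ $f(a,b)\le 2$.

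The main obstacle is making sure the four-dimensional embedding $E(1,b)\hookrightarrow B^4(2)$ is actually available in the stated range, i.e.\ verifying $g(b)\le 2$ for all $b\in[1,4]$ rather than only at $b=4$; the paper only quotes $g(4)=\sqrt 4$ explicitly in \fullref{capac4}, so one must invoke monotonicity of $g$ (which follows since $E(1,b)\subset E(1,b')$ for $b\le b'$, so $g(b)\le g(b')$) to conclude $g(b)\le g(4)=2$ on the whole interval. Once that is in hand, the suspension via \fullref{e} and the final inclusion $E(2,2,a)\subseteq B^6(2)$ are routine, and combined with the Ekeland--Hofer lower bound this yields $f(a,b)=2$.
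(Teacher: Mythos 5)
Your proposal is correct and follows essentially the same route as the paper: both obtain the upper bound by suspending the four-dimensional embedding $E(1,b)\hookrightarrow E(2,2)$ (valid since $g(b)=2$ on $[2,4]$ by McDuff--Schlenk; your derivation via monotonicity of $g$ and $g(4)=2$ is a fine substitute for the paper's direct citation) using \fullref{e}, then including $E(2,2,a)\subset B^6(2)$, and both get the matching lower bound from $c_3(E(1,a,b))=2=c_3(B(2))$.
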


\begin{proof}We have the embeddings \[E(1,a,b) \cong E(a,1,b) \hookrightarrow E(a,2,2)
\hookrightarrow B(2)\] where the first arrow follows from \fullref{e}
since $g(b)=2$ (see \cite[Figure~ 1.1]{mcdsch}) and the second arrow is the
inclusion.

The third Ekeland--Hofer capacity
\[c_3(E(1,a,b))=2=c_3(B(2))\]
and so our construction is optimal.
\end{proof}

We give one final computation which will rely on the following.

\begin{lemma}
 \label{olga0}
  $E(1,1,8) \hookrightarrow B(2)$.
\end{lemma}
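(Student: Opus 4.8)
The plan is to deduce $E(1,1,8) \hookrightarrow B(2)$ from the known four-dimensional embedding $E(1,8) \hookrightarrow B^4(c)$ for an appropriate $c$, via \fullref{e}. First I would determine the value of $g(8)$ from McDuff and Schlenk: the point $b=8$ lies below the threshold $8\tfrac{1}{36}$, so $g(8)$ need not equal $\sqrt 8$; instead I would read off from \cite[Figure~1.1]{mcdsch} (or the explicit formula of \cite[Theorem~1.1.2]{mcdsch}) that on the relevant interval $g(b)=2$, in particular $g(8)=2$. Granting this, $E(1,8) \hookrightarrow B^4(2) = E(2,2)$, and then \fullref{e} applied with $m=2$, appending the extra axis of size $1$, gives
\[
E(1,8,1) \cong E(1,1,8) \hookrightarrow E(2,2,1) \subset B(2),
\]
the last step being inclusion since $1 \le 2$. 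That completes the embedding direction.

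The one genuine subtlety is verifying that $g(8)=2$ rather than something larger. The McDuff--Schlenk function $g$ is built from the Pell-number "Fibonacci staircase" on $[1,\tau^4]$ (where $\tau$ is the golden ratio) and then takes a few more interpolating values before stabilizing at $\sqrt b$ for $b \ge 8\tfrac1{36}$; one must check that $8$ falls in the sub-interval where $g\equiv 2$. This is exactly the same fact already invoked in the proof of \fullref{emb4} (there phrased as "$g(b)=2$ for $2 \le b \le 4$", and it in fact persists up to $b=8$ where $g$ starts increasing again — note $2 = \sqrt 4$ and the staircase pattern resumes). So the citation to \cite[Figure~1.1]{mcdsch} suffices; no new analysis of $g$ is needed. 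I would simply state "$g(8)=2$ by \cite[Figure~1.1]{mcdsch}" as the input.

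I do not expect any obstacle on the optimality side, because the lemma as stated only asserts the existence of the embedding $E(1,1,8) \hookrightarrow B(2)$ (equivalently, for every $\lambda>1$ a genuine embedding into $\lambda B(2)$), not that $2$ is the infimal value. Indeed it is \emph{not} optimal: the volume obstruction \fullref{ob1} only requires $c^3 \ge 1\cdot 1\cdot 8 = 8$, i.e. $c \ge 2$, so here the volume bound is actually achieved, but one should note that $f(1,8)$ genuinely equals $2$ and this is consistent. In any case the proof need only exhibit the chain of embeddings above. So the full argument is: quote $g(8)=2$; apply the four-dimensional embedding $E(1,8)\hookrightarrow B^4(2)=E(2,2)$; suspend by \fullref{e} to get $E(1,1,8)\cong E(1,8,1)\hookrightarrow E(2,2,1)$; and include $E(2,2,1)\subset B(2)$. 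The main (and only) thing to be careful about is correctly locating $b=8$ on the McDuff--Schlenk graph, which the cited figure settles.
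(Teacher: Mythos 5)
Your argument fails at its key input: $g(8)\neq 2$. The four--dimensional volume obstruction (\fullref{ob1}) applied to $E(1,8)\hookrightarrow B^4(c)=E(c,c)$ forces $c^2\ge 1\cdot 8$, i.e.\ $c\ge 2\sqrt{2}\approx 2.83$, so $g(8)\ge 2\sqrt 2>2$. (In fact McDuff--Schlenk give $g(b)=\sqrt b$ for $b\ge 8\tfrac1{36}$ and slightly more than $\sqrt b$ just below that threshold; the plateau $g\equiv 2$ can only extend to $b=4$, precisely because $g(b)\ge\sqrt b$.) You checked the six--dimensional volume constraint $c^3\ge 8$ but overlooked that your intermediate four--dimensional embedding violates the four--dimensional one. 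Consequently the chain $E(1,8)\hookrightarrow B^4(2)=E(2,2)$, and hence the whole suspension argument, does not exist. This is not a repairable detail: the entire point of the lemma is that $E(1,1,8)$ fully fills $B^6(2)$ even though no four--dimensional slice argument through $B^4(2)$ can work.

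The paper's proof instead invokes \fullref{olga3} with $k=2$, $n=3$. Unwinding that induction, the correct four--dimensional input is not an embedding into a ball but the ellipsoid--to--ellipsoid embedding $E(1,8)\hookrightarrow E(2,4)$ (this is \fullref{olga2} with $k=2$, $x=1$, proved via McDuff's weight decomposition and a toric ball packing; note $E(2,4)$ has the same volume as $E(1,8)$, so no obstruction arises). Suspending by \fullref{e} gives $E(1,1,8)\hookrightarrow E(1,2,4)\cong E(2,1,4)$, and then the legitimate fact $g(4)=2$ from \fullref{capac4}, i.e.\ $E(1,4)\hookrightarrow E(2,2)$, suspended again by \fullref{e}, yields $E(2,1,4)\hookrightarrow E(2,2,2)=B(2)$. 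If you want to salvage your write-up, replace the false claim $g(8)=2$ with this two-step route through $E(2,4)$.
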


\begin{proof}
This is a particular case of \fullref{olga3} below.
\end{proof}

\begin{lemma}
 \label{emb5}
  Suppose that $2 \le b \le 8$. Then $f(1,b)=2$.
\end{lemma}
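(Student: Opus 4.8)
The plan is to prove the matching upper and lower bounds $f(1,b)\le 2$ and $f(1,b)\ge 2$ separately. For the lower bound, observe that since $2\le b\le 8$, the ordered sequence of numbers in $\{k_1\cdot 1, k_2\cdot 1, k_3\cdot b\}$ begins $1,1,2$ (the value $b$ does not appear before the third slot precisely because $b\ge 2$, while the two copies of $1$ and then $2=2\cdot 1$ fill the first three slots when $b\le 2$, and when $b>2$ the third slot is $\min(2,b)=2$). Hence $c_3(E(1,1,b))=2$. Since $c_3(B(2))=2$ as well, \fullref{ob2} shows there is no embedding $E(1,1,b)\hookrightarrow\mu B(2)$ for $\mu<1$, so $f(1,b)\ge 2$. (One must double-check the edge case $b=8$: here the relevant capacity is still $c_3$, with $c_3(E(1,1,8))=2$; the larger index capacities $c_k$ for $E(1,1,8)$ only catch up with those of $B(2)$ much later, so they give no obstruction beyond $2$.)

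For the upper bound I would split the interval $[2,8]$ at $b=4$. When $2\le b\le 4$ we already have $f(1,b)=2$ directly from \fullref{emb4} applied with $a=1$ (the hypothesis $1\le a\le 2\le b\le 4$ is satisfied), so $f(1,b)\le 2$ there. When $4\le b\le 8$, the idea is to use \fullref{olga0}, which gives $E(1,1,8)\hookrightarrow B(2)$, together with the trivial inclusion $E(1,1,b)\hookrightarrow E(1,1,8)$ valid since $b\le 8$. Composing, $E(1,1,b)\hookrightarrow E(1,1,8)\hookrightarrow B(2)$, whence $f(1,b)\le 2$. Combining the two ranges covers all of $2\le b\le 8$, and together with the lower bound this gives $f(1,b)=2$.

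The only real subtlety — and the step I would be most careful about — is the combinatorial verification that $c_3(E(1,1,b))=2$ uniformly over the whole closed interval $2\le b\le 8$, including both endpoints; this is what makes the Ekeland–Hofer obstruction exactly sharp rather than merely giving a weaker bound. Everything else is a routine assembly of embeddings already established: the inclusion $E(1,1,b)\subset E(1,1,8)$ for $b\le 8$, \fullref{olga0}, and \fullref{emb4}. Thus the proof reduces to invoking \fullref{emb4} for $b\le 4$, invoking \fullref{olga0} composed with an inclusion for $b\ge 4$, and computing one Ekeland–Hofer capacity for the lower bound.
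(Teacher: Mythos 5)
Your proof is correct and is essentially the paper's argument, with the logical dependencies spelled out slightly more explicitly. The paper phrases the upper bound on $[4,8]$ as ``$f(1,b)$ is an increasing function of $b$'' (which is precisely your inclusion $E(1,1,b)\subset E(1,1,8)$) and gets the lower bound by quoting $f(1,4)=2$ from \fullref{emb4} rather than recomputing $c_3(E(1,1,b))$, but the ingredients — \fullref{emb4}, \fullref{olga0}, the inclusion/monotonicity, and the third Ekeland--Hofer capacity — are the same.
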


\begin{proof} $f(1,b)$ is an increasing function of $b$.  By \fullref{emb4}, we know that
$f(1,4)=\nb2$  and \fullref{olga0} says that $f(1,8)=2$.
Thus $f(1,b)$ is in fact constant on the interval $4 \le b \le 8$.
\end{proof}

\section{Volume filling embeddings}\label{sec3}

In this section we prove \fullref{fullfill2}, a more precise version of
\fullref{fullfill}. First, in \fullref{sec31} we recall two theorems
of D~McDuff on embeddings in dimension $4$. The first reduces an ellipsoid
embedding problem to one of embedding a disjoint union of balls into a ball,
the second gives necessary conditions for embedding a disjoint union of balls.
We close the subsection with two useful consequences. In \fullref{sec32}
we use these theorems to derive some preliminary results on embedding
ellipsoids in higher dimensions. We think these are quite interesting in
themselves; the main result is \fullref{olga4} which will be applied in
\fullref{sec4} to give our full packing result. Finally in \fullref{sec33}
we prove \fullref{fullfill2}.

\subsection{Four dimensional embeddings} \label{sec31}

Here we review some results of McDuff which allow one to translate a
$4$--dimensional ellipsoid embedding problem into one of a disjoint union of
balls, and then give an algebraic solution for the ball embedding problem.

\begin{proposition}[McDuff \cite{mcduff1,mcduff2}]
 \label{mcdprop}
  Let $e,f,c,d$ be positive integers with $e \leq f$ and $c \leq d$. There
exists a weight expansion
  $W(e,f)$ associated to any pair of integers such that a symplectic  embedding
of ellipsoids
  $E(e,f) \longrightarrow E(c,d)$ exists if and only if there exists a
symplectic embedding of balls
  \begin{equation}
   \label{mcduffballellipse}
    \bigl(\tsty\bigsqcup B(W(e,f)) \bigr) \cup \bigl( \bigsqcup B(W(d-c,d)) \bigr)
\hookrightarrow B(d).
  \end{equation}
\end{proposition}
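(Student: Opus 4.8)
The plan is to prove the equivalence in two essentially independent steps: a \emph{domain reduction} replacing the ellipsoid $E(e,f)$ by a disjoint collection of balls, and a \emph{target adjustment} replacing the ellipsoid target $E(c,d)$ by the ball $B(d)$ at the price of adjoining the balls $B(W(d-c,d))$. It is convenient to fix first the weight expansion: given integers $e\le f$, if $e\mid f$ put $W(e,f)=(e,\dots,e)$ with $f/e$ entries; otherwise write $f=qe+r$ with $0<r<e$ and put $W(e,f)=(e,\dots,e,W(r,e))$ with $q$ copies of $e$, continuing recursively. An easy induction gives $\sum_i w_i^2=ef$, so every decomposition appearing below is volume filling; in particular the volumes on the two sides of the claimed criterion match automatically.

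\emph{Domain reduction.} I would show that for any symplectic $4$--manifold $N$ there is a symplectic embedding $E(e,f)\hookrightarrow N$ if and only if there is one of $\bigsqcup B(W(e,f))\hookrightarrow N$. One direction is elementary: in the moment--polytope picture the right triangle with legs $e$ and $f$ subdivides, following the continued fraction of $(e,f)$, into right triangles whose legs are the weights $W(e,f)$, and each such sub-triangle is the polytope of an embedded ball; hence the open ellipsoid contains a disjoint family of balls $B(w_i)$ of full total volume, and any embedding of $E(e,f)$ restricts to one of $\bigsqcup B(W(e,f))$. The converse --- reassembling embedded balls into an ellipsoid --- is the substantial step, and this is where Seiberg--Witten--Taubes theory enters. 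I would pass through the blow-up dictionary of McDuff--Polterovich~\cite{MP} and Biran~\cite{Bi}, whereby packing balls $B(w_i)$ into $N$ corresponds to the existence of a symplectic form in a prescribed cohomology class on the multiple blow-up of $N$ with the exceptional classes realised by embedded symplectic spheres; one then uses an inflation argument (in the sense of Lalonde--McDuff~\cite{ML}) along those spheres to deform the packing into the standard toric normal form, which can be re-glued into a neighbourhood of the ellipsoid.

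\emph{Target adjustment.} Applying the previous step with $N=E(c,d)$, it remains to show $\bigsqcup B(w_i)\hookrightarrow E(c,d)$ if and only if $\bigl(\bigsqcup B(w_i)\bigr)\cup\bigl(\bigsqcup B(W(d-c,d))\bigr)\hookrightarrow B(d)$. The geometric input is that the ball $B(d)=E(d,d)$ admits a volume-filling decomposition into $E(c,d)$ together with $\bigsqcup B(W(d-c,d))$: in the moment polytope the simplex of $B(d)$ is the union of the thin triangle of $E(c,d)$ and a complementary triangle which, up to a lattice transformation and a translation, is the triangle of $E(d-c,d)$ and hence subdivides (via the continued fraction of $(d-c,d)$) into the ball triangles $B(W(d-c,d))$. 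The ``only if'' direction is then immediate: place the $B(w_i)$ inside the $E(c,d)$ piece and leave the other balls where they are. For ``if'', starting from a simultaneous embedding into $B(d)$ of all the balls, I would invoke McDuff's theorem~\cite{McDuffdif} that the space of symplectic ball packings of a ball is path connected in order to move the $W(d-c,d)$ balls back into their designated slot, thereby freeing the $E(c,d)$ region for the $B(w_i)$.

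Composing the two reductions yields exactly the stated criterion. The step I expect to be the main obstacle is the converse half of the domain reduction, together with the analogous rearrangement in the target adjustment: these cannot be achieved by explicit symplectomorphisms and genuinely require the gauge-theoretic description of the symplectic cone of rational surfaces, plus a careful check that the two families $W(e,f)$ and $W(d-c,d)$ can be embedded \emph{simultaneously} and disjointly. By contrast the continued-fraction bookkeeping and the volume matching are routine.
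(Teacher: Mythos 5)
This proposition is stated in the paper as a citation to McDuff~\cite{mcduff1,mcduff2}; the paper does not supply its own proof, so the only meaningful comparison is to McDuff's argument.

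Your two-step plan (domain reduction, then target adjustment) is a reasonable reconstruction of the result, and your sketch of the domain reduction correctly identifies the blow-up dictionary plus inflation/holomorphic-curve machinery as the engine. The weight-sequence bookkeeping, the additivity $\sum w_i^2 = ef$, and the toric picture of the complement of $E(c,d)$ in $B(d)$ as the polytope of $E(d-c,d)$ are all fine. However, the ``if'' direction of your \emph{target adjustment} has a genuine gap. You propose to take the given packing of $\bigsqcup B(w_i)\cup\bigsqcup B(W(d-c,d))$ into $B(d)$, and use connectedness of the space of ball packings~\cite{McDuffdif} to isotope the $W(d-c,d)$ balls into the designated toric slot, ``thereby freeing the $E(c,d)$ region for the $B(w_i)$.'' But moving the $W(d-c,d)$ balls into the slot does not confine the $B(w_i)$ to $E(c,d)$: the closed balls $B(W(d-c,d))$ can never exactly tile the slot, so after the ambient Hamiltonian isotopy the $B(w_i)$ are only known to avoid the $W(d-c,d)$ images, i.e.\ to lie inside $E(c,d)$ together with a thin leftover region in the slot. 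There is no reason for the $B(w_i)$ to avoid that leftover, and no simple scaling of the target repairs this (the leftover lies on the ``far side'' of $E(c,d)$, not in a direction captured by dilating the ellipsoid). Attempting instead to isotope \emph{all} the balls at once into a packing where $W(d-c,d)$ occupies the slot and the $w_i$ land in $E(c,d)$ is circular, since the existence of such a packing is exactly what is to be proved.

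In McDuff's actual proof the two reductions are not carried out by ambient isotopy of one packing to another. Rather, both the domain ellipsoid $E(e,f)$ and the complement of $E(c,d)$ in $B(d)$ are encoded simultaneously as chains of exceptional classes in a blow-up of $\CP^2$, and the embedding question is converted into a statement about the symplectic cone of that rational surface; the passage back to an honest ellipsoid embedding uses the orbifold blow-down / rational blow-down construction (Godinho, Symington), not connectedness of packings. So the place where your plan departs from what is actually needed is precisely the step you flagged as the ``main obstacle'': it is not just a question of simultaneity and disjointness, but of replacing your isotopy argument in the target adjustment by the blow-down machinery.
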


Let us explain the weight sequences $W(e,f)$, as they are defined for instance
in \cite{mcduff2}.
\begin{equation}\label{w}
 W(e,f) =( X_0^{\times l_0},  X_1^{\times l_1}, \ldots,  X_K^{\times l_K}),
 \end{equation}
 where the multiplicities $l_i$ are the entries in the continued fraction
expansion
\[
  \frac{f}{e} =
  [l_0;l_1,\ldots,l_k] = l_0 + \cfrac{1}{l_1+\cfrac{1}{\ddots_{\ \tsty\unfrac{1}{l_k}}}}
\]
and the entries $X_i$ are defined inductively as follows:
 \begin{equation}\label{x}
 X_{-1} =f, X_0= e,X_{i+1}= X_{i-1}- l_i X_i.
  \end{equation}
We use the notation
\begin{equation}
 \label{b}
\tsty  \bigsqcup B(W(e,f)) :=
  \bigl( \bigsqcup_{i=1}^{l_0} B(X_0) \bigr) \cup
   \bigl( \bigsqcup_{i=1}^{l_1} B(X_1) \bigr) \cup \cdots \cup
   \bigl( \bigsqcup_{i=1}^{l_K} B(X_K) \bigr).
\end{equation}
McDuff and Polterovich~\cite{MP} equated the problem of embedding
$M$ disjoint balls with understanding the symplectic cone of the $M$--fold
blow-up
$X_{M}$ of $\CP^{2}$.
The structure of the cone was intensely studied by Biran \cite{Bi} and later
T-J~Li and Liu \cite{li-liu} and B-H~Li and T-J~Li \cite{li-li}. To explain their results, denote
by $L, E_1, \ldots, E_M \in H_2(X_m, \Z)$ the homology classes of the line and
the exceptional divisors, by $-K:=3L-\sum_{i} E_{i}$ the anticanonical class,
and by $l, e_1,\ldots, e_M$ their Poincar{\'e} duals. With respect to the basis
$L, -E_1, \dots ,-E_M$ we represent homology classes by $(M{+}1)$--tuples  $(d,
\bar{m})$. We fix a symplectic form $\omega_M$ on $X_M$  obtained by
symplectically blowing up $(\CP^{2},\omega)$ with the standard Fubini--Study
form $\omega$.
Define $\mathcal{C}_M$ to be the set of all cohomology classes in $H^2(X_M)$
that can be represented by symplectic forms whose first Chern classes are
Poincar{\'e} dual to $-K$.
Next define the exceptional cone to be the set of
homology classes $\mathcal{E}_M \subset H_2(X_M, \Z)$,
\[
  \mathcal{E}_{M} :=
  \bigl\{E \mid E\cdot E=-1,\ \text{$E$ is represented by an embedded
              $\omega_M$--symplectic sphere} \bigr\}.
\]
Since the classes $E$ in the exceptional cone have  nontrivial Gromov
invariants, the definition of the exceptional cone is independent of the choice
of $\omega_M$.
 B-H~Li and T-J~Li~\cite{li-li} showed that 
\[
  \mathcal{C}_{M} =
   \bigl\{\,\alpha \in H^2(X_M) \mid\alpha^2>0 \text{~and $\alpha(E) >0$ for all }
E\in \mathcal{E}_M  t\,\bigr\}.
\]
Given these definitions, the work of McDuff and Polterovich, Biran, T-J~Li and Liu and
B-H~Li and T-J~Li, gives the following criteria for embedding disjoint unions of balls.

\begin{proposition}{\rm{\cite{MP,Bi,li-li,Li-Li-Diff,li-liu}}}\qua
 \label{mcdprop2}
A symplectic ball embedding
$$\tsty\bigsqcup_{ i = 1, \ldots, M} B^4(w_i) \hookrightarrow B^{4}( \mu)$$ exists if, and
only if, $ \mu l - \sum_{i=1}^M w_i e_i \in \mathcal{C}_M$. This is equivalent
to the following two conditions.

\begin{enumerate}

\item $ \mu^2 \ge \sum_{i=1}^M w^2_i $.

\item For any $(d',\bar{m}) \in \mathcal{E}_M$, we have
\begin{equation}\label{dineq}
d'  \mu \ge \sum_{i=1}^M m_i w_i.
\end{equation}
\end{enumerate}
\end{proposition}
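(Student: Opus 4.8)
The plan is to treat this statement as a synthesis of the cited works, importing the substantive input from \cite{MP,Bi,li-liu,li-li} and carrying out only the coordinate bookkeeping ourselves; there are really three steps.

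\emph{Step 1: reduction to the symplectic cone of a blow-up.} First I would invoke the McDuff--Polterovich correspondence \cite{MP}: a packing $\bigsqcup_{i=1}^M B^4(w_i) \hookrightarrow B^4(\mu)$ is equivalent to the existence of a symplectic form on the $M$--fold blow-up $X_M$ of $\CP^2(\mu)$ lying in the class $\mu l - \sum_{i=1}^M w_i e_i$ and with first Chern class Poincar\'e dual to $-K$. One direction is the symplectic blow-up performed along the $M$ disjoint embedded balls; the other is the symplectic blow-down of the $M$ exceptional spheres (whose existence is guaranteed precisely by the condition on $c_1$ together with the fact that exceptional classes carry nontrivial Gromov invariants), the only further point being that the resulting balls, a priori embedded only in $\CP^2(\mu)$, can be isotoped into the affine chart $B^4(\mu)$, which is standard. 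By the definition of $\mathcal{C}_M$ this is exactly the condition $\mu l - \sum_i w_i e_i \in \mathcal{C}_M$, which is the first equivalence of the proposition.

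\emph{Step 2: the shape of the cone.} The real content -- and the step I expect to carry all the weight, though it is entirely contained in the references -- is the identification of $\mathcal{C}_M$ with the set of classes cut out by the obvious positivity constraints. Biran's inflation technique \cite{Bi} produces the required symplectic forms in the generic situation, and the general case is completed by T-J~Li and Liu \cite{li-liu} and B-H~Li and T-J~Li \cite{li-li}, yielding
\[
\mathcal{C}_M = \{\,\alpha \in H^2(X_M)\mid \alpha^2 > 0 \ \text{and}\ \alpha(E) > 0 \ \text{for all}\ E \in \mathcal{E}_M\,\},
\]
as recalled above; I would simply quote this.

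\emph{Step 3: coordinate translation and boundary behaviour.} Finally I would rewrite the two defining inequalities for $\alpha = \mu l - \sum_i w_i e_i$. From $l^2 = 1$, $e_i^2 = -1$ and the vanishing of the mixed intersection numbers one gets $\alpha^2 = \mu^2 - \sum_i w_i^2$, so $\alpha^2 > 0$ is condition (1) in strict form; and writing an exceptional class as $E = (d',\bar{m}) = d'L - \sum_i m_i E_i$, Poincar\'e duality gives $\alpha(E) = d'\mu - \sum_i m_i w_i$, so $\alpha(E) > 0$ for all $E \in \mathcal{E}_M$ is the inequality in (2) in strict form. To obtain the non-strict inequalities as stated one uses the convention built into $\hookrightarrow$: if (1) and (2) hold for $(\mu,\bar{w})$ then for every $\lambda > 1$ the class $\lambda\mu\, l - \sum_i w_i e_i$ satisfies all the inequalities strictly -- its square is $\lambda^2\mu^2 - \sum_i w_i^2 > 0$, and each $E \in \mathcal{E}_M$ has $d' \ge 0$ with $d' = 0$ only for $E = E_i$ (in which case (2) reads $0 \ge -w_i$ and is automatic), so $d'\lambda\mu > d'\mu \ge \sum_i m_i w_i$ whenever $d' \ge 1$ -- while the reverse implication follows by letting $\lambda \to 1$. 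Assembling Steps 1--3 gives the equivalence of the packing with $\mu l - \sum_i w_i e_i \in \mathcal{C}_M$ and with conditions (1) and (2).
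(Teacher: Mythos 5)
The paper gives no proof of this proposition at all: it is stated purely as a citation to \cite{MP,Bi,li-liu,li-li,Li-Li-Diff}, preceded only by the remark that ``the work of McDuff and Polterovich, Biran, T-J Li and Liu and B-H Li and T-J Li gives the following criteria.'' Your reconstruction assembles the right pieces in the right order: the McDuff--Polterovich blow-up/blow-down correspondence reduces the packing to membership of $\mu l-\sum w_ie_i$ in the symplectic cone; the identification of $\mathcal{C}_M$ by strict positivity against exceptional classes is exactly the theorem of B-H~Li and T-J~Li quoted in the text just above the proposition; and the coordinate translation $\alpha^2=\mu^2-\sum w_i^2$, $\alpha(E)=d'\mu-\sum m_iw_i$ is correct in the paper's sign conventions (note in particular that $E_i$ has coordinates $(d',\bar m)=(0,\ldots,-1,\ldots)$, not $(0,\ldots,+1,\ldots)$, which is why $d'=0$ makes (2) automatic as you say). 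You also go further than the paper does in addressing the one genuine subtlety: the cone $\mathcal{C}_M$ is defined by strict inequalities while conditions (1) and (2) are non-strict, and the paper's statement that these are ``equivalent'' only holds under its running convention that $\hookrightarrow$ means embedding into every $\lambda$-rescaling with $\lambda>1$. Your observation that all exceptional classes with $d'=0$ are exactly the $E_i$, so that for $d'\geq 1$ one can pass freely between the open cone and its closure by scaling, is the correct way to bridge that gap. In short, since the paper offers no argument, there is nothing to compare against, but your synthesis of the cited results is accurate and fills in the bookkeeping the paper leaves implicit.
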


We close this subsection with the following two applications of these results.

\begin{proposition}
 \label{olga2}
  For any $k,x \in \NN$, the following embedding holds:
  \begin{equation}
    E(1,k^{2x+1}) \hookrightarrow E(k^x,k^{x+1}).
  \end{equation}
\end{proposition}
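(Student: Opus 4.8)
The plan is to use \fullref{mcdprop} to convert the ellipsoid embedding problem $E(1,k^{2x+1}) \hookrightarrow E(k^x,k^{x+1})$ into a ball-packing problem, and then verify the resulting packing exists via \fullref{mcdprop2}. First I would compute the weight expansion $W(1,k^{2x+1})$: since $k^{2x+1}/1 = [k^{2x+1}]$ is already an integer, the continued fraction terminates immediately and $W(1,k^{2x+1}) = (1^{\times k^{2x+1}})$, i.e.\ $\bigsqcup B(W(1,k^{2x+1}))$ consists of $k^{2x+1}$ balls of capacity $1$. For the second weight sequence I need $W(d-c,d)$ with $c = k^x$, $d = k^{x+1}$, so $d - c = k^x(k-1)$ and $d = k^{x+1}$; here $d/(d-c) = k/(k-1) = [1; k-1]$, so the continued fraction expansion gives multiplicities $l_0 = 1$, $l_1 = k-1$, and running the recursion \eqref{x} with $X_{-1} = k^x(k-1)$, $X_0 = k^{x+1}$ — wait, one must be careful with the roles of $e$ and $f$; I would set $e = d-c = k^x(k-1) \le f = d = k^{x+1}$ and get $X_{-1} = k^{x+1}$, $X_0 = k^x(k-1)$, $X_1 = X_{-1} - l_0 X_0 = k^{x+1} - k^x(k-1) = k^x$, and $X_2 = X_0 - l_1 X_1 = k^x(k-1) - (k-1)k^x = 0$, so $W(d-c,d) = (k^x(k-1)^{\times 1}, k^{x \times (k-1)})$: one ball of capacity $k^x(k-1)$ and $k-1$ balls of capacity $k^x$.

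So by \fullref{mcdprop} the claim reduces to showing
\[
\Bigl(\tsty\bigsqcup_{k^{2x+1}} B(1)\Bigr) \cup B(k^x(k-1)) \cup \Bigl(\tsty\bigsqcup_{k-1} B(k^x)\Bigr) \hookrightarrow B(k^{x+1}).
\]
Next I would verify this via the two conditions of \fullref{mcdprop2} with $\mu = k^{x+1}$. The volume condition (1) is the identity $k^{2x+2} = k^{2x+1}\cdot 1 + (k^x(k-1))^2 + (k-1)k^{2x}$, which expands to $k^{2x+1} + k^{2x}(k-1)^2 + (k-1)k^{2x} = k^{2x+1} + k^{2x}(k^2 - 2k + 1 + k - 1) = k^{2x+1} + k^{2x}(k^2-k) = k^{2x+1} + k^{2x+2} - k^{2x+1} = k^{2x+2}$; so the volumes match exactly, meaning the embedding will be optimal. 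For condition (2), I need to check $d'\mu \ge \sum m_i w_i$ for every exceptional class $(d',\bar m) \in \mathcal{E}_M$. This is the step I expect to be the main obstacle: one cannot enumerate all of $\mathcal{E}_M$ directly. The standard device (used by McDuff and others) is that a volume-filling packing, i.e.\ one saturating condition (1), automatically satisfies condition (2) — the point is that if $\alpha = \mu l - \sum w_i e_i$ has $\alpha^2 = 0$ (or more precisely lies on the boundary of the positive cone) and $\alpha$ pairs positively with the classes $l$ and $e_i$, then by the light-cone lemma $\alpha$ pairs nonnegatively with any class of nonnegative square, and every $E \in \mathcal{E}_M$ satisfies $E^2 = -1$ but $(E + \text{something})$... more carefully, one uses that $-K \cdot E = 1 > 0$ and the Cauchy--Schwarz/light-cone inequality $\alpha \cdot E \ge 0$ whenever $\alpha^2 \ge 0$, $\alpha \cdot (-K) > 0$ and $E$ is in the closure of the symplectic cone.

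Thus the strategy is: (i) compute the two weight sequences explicitly as above; (ii) invoke \fullref{mcdprop} to reduce to the displayed ball packing; (iii) check the volume identity, which holds with equality; (iv) deduce condition (2) of \fullref{mcdprop2} from the light-cone lemma applied to the boundary class $\alpha = \mu l - \sum w_i e_i$, using that $\alpha \cdot (-K) > 0$. The arithmetic in steps (i) and (iii) is routine; the only genuinely delicate point is (iv), handling the full exceptional cone, and there the standard trick is precisely that volume-filling configurations need no further checking because the relevant class sits on the light cone. I would also double-check the edge cases $k = 1$ (trivial, both sides equal $E(1,1) = B(1)$) and small $x$ to make sure the continued fraction computations degenerate correctly.
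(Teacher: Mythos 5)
Your steps (i)--(iii) are fine and reproduce exactly the weight computation in the paper: both $W(1,k^{2x+1})$ and $W(k^{x}(k-1),k^{x+1})$ come out as you say, and the displayed ball-packing problem is the same one the paper arrives at. The problem is step (iv). The ``standard trick'' you invoke --- that a packing saturating the volume condition automatically satisfies the exceptional-class inequalities --- is false, and the light-cone argument you sketch does not apply. The light-cone lemma gives $\alpha\cdot\beta\ge 0$ when \emph{both} $\alpha$ and $\beta$ lie in the forward closed positive cone, but an exceptional class $E$ has $E^2=-1$ and so is not in that cone; the attempted patch via $\alpha\cdot(-K)>0$ does not rescue the argument. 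Indeed the whole content of $4$--dimensional packing obstructions is that exactly this implication fails: for example $\bigsqcup_2 B(1)\hookrightarrow B(\sqrt{2})$ is the volume-filling candidate but is obstructed by the class $L-E_1-E_2$, for which $d'\mu=\sqrt 2 < 2 = m_1w_1+m_2w_2$. So one cannot conclude condition~(2) of \fullref{mcdprop2} for free from the volume identity.

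Nor does the crude bound used in \fullref{lambdatrick} close the gap here: estimating $\sum m_iw_i\le (\max_i w_i)(3d'-1)$ with $\max_i w_i=k^x(k-1)$, the inequality $k^x(k-1)(3d'-1)\le k^{x+1}d'$ reduces to $(2k-3)d'\le k-1$, which already fails for $k\ge 3$ (and for $k=2,d'\ge 2$). The paper therefore does not verify condition~(2) at all; instead it produces the required embedding $\bigl(\bigsqcup_{k^{2x+1}}B(1)\bigr)\cup B(k^{x+1}-k^x)\cup\bigl(\bigsqcup_{k-1}B(k^x)\bigr)\hookrightarrow B(k^{x+1})$ \emph{constructively}, via a toric decomposition of the big simplex into one subpolytope of capacity $k^{x+1}-k^x$ and $2k-1$ subpolytopes of capacity $k^x$ (Traynor's explicit toric ball embeddings), then further subdividing $k$ of those small simplices each into $k^{2x}$ unit balls. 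You should replace your step (iv) with either that explicit construction or a genuine case-by-case verification of \eqref{dineq} for the relevant exceptional classes; as written, the argument has a real gap.
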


\begin{proof}
We apply \fullref{mcdprop} with $e=1, f=k^{2x+1}, c=k^x, d=k^{x+1}$.
Then the equivalent embedding \eqref{mcduffballellipse} (see the notation
\eqref{b}) becomes
\begin{equation}
 \label{embball1}
\tsty  \bigsqcup B\big( W(1,k^{2x+1}) \big) \cup
  \bigsqcup B\big( W(k^{x+1}-k^x,k^{x+1}) \big) \hookrightarrow B(k^{x+1}).
\end{equation}
The continuous fraction expansions are $\unfrac{1}{\mskip-0.5mu k^{2x+`1}}``=[0;k^{2x+`1}]$
and  $\upnfrac     {k^{x+`1}`-\nb k^x}{\mskip-0.5mu k^{x+`1}}``=[0;1,k-1]$, so the vectors described in
\eqref{x} express this last embedding as
\begin{equation}
 \label{embball2}
\tsty  \bigl( \bigsqcup_{i=1}^{k^{2x+1}} B_i(1) \bigr) \cup
   B(k^{x+1}-k^x) \cup
   \bigl( \bigsqcup_{j=1}^{k-1} B(k^x) \bigr) \hookrightarrow B(k^{x+1}).
\end{equation}
This embedding can be seen from a toric viewpoint. Indeed, there exists a toric
decomposition of the ball of capacity $k^{x+1}$ containing an open ball of
capacity $k^{x+1} -\nb k^x$ and the preimages of $2k-1$ polytopes of capacity
$k^x$ as shown in \fullref{figure2}. For the fact that these open toric
manifolds admit embeddings of an open ball of the stated capacity, see
Traynor~\cite[Proposition $5.2$]{traynor}. But a ball of capacity $k^x$ can be filled
with $k^{2x}$ balls of capacity $1$; see \cite[Construction~$3.2$]{traynor} 
for an explicit construction. If we decompose $k$ of our $2k-1$ such
balls in this way then we get the embedding as required.

\begin{figure}[ht!]
\labellist
\small\hair2pt
\pinlabel {$k^x `=` 25$} at 190 591
\pinlabel {$k^{x+1}-k^x`=`100$} at 226 390
\endlabellist
\centering
  \includegraphics[scale=.65]{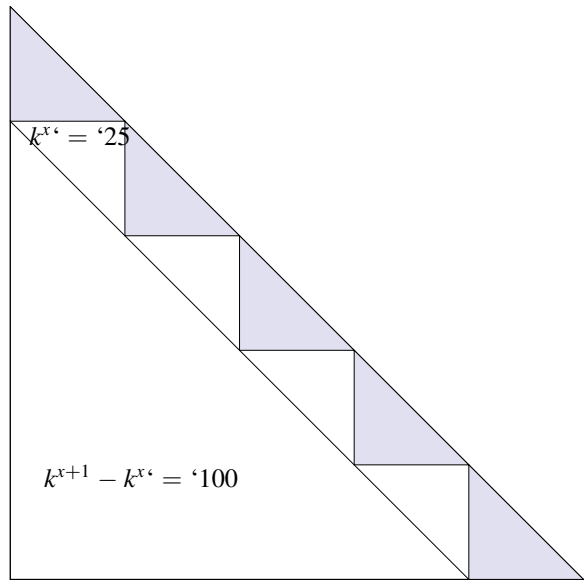}
  \caption{A toric decomposition of the ball when $k=5,\ x=2$. The same toric
packing strategy applies to
       any $k,x $ natural.}
   \label{figure2}
\end{figure}

This concludes the proof of \fullref{olga2}.
\end{proof}

\begin{lemma}\label{lambdatrick}
If $ \sqrt{\unfrac {2}{3}} \leq \lambda \leq 1$ and $b \geq 9$, then $E(1,b)
\hookrightarrow   E(\lambda \sqrt{b}, \lambda^{-1} \sqrt{b}) $.
\end{lemma}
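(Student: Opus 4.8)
The plan is to reduce this four–dimensional ellipsoid embedding to a ball packing problem via \fullref{mcdprop}, and then verify the resulting packing exists using the numerical criteria in \fullref{mcdprop2}. The cleanest approach, however, is probably to avoid working directly with the weight expansions of $E(\lambda\sqrt b,\lambda^{-1}\sqrt b)$ and instead argue as follows. Since the target ellipsoid $E(\lambda\sqrt b,\lambda^{-1}\sqrt b)$ has the \emph{same volume} as $E(1,b)$ (both equal $\sqrt{b}\cdot\sqrt{b}/1 \cdot 1=b$... more precisely the product of the axes is $b$ in both cases), the volume obstruction \fullref{ob1} is exactly saturated, so this is a volume–filling statement and there is no slack to exploit — I will need the full strength of McDuff's results.

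First I would record the elementary inequalities that hold under the hypotheses $\sqrt{2/3}\le\lambda\le 1$ and $b\ge 9$: namely $\lambda\sqrt b \le \lambda^{-1}\sqrt b$ (so the axes are correctly ordered), $\lambda^{-1}\sqrt b \le \sqrt{3/2}\,\sqrt b$, and $\lambda\sqrt b \ge \sqrt{2/3}\,\sqrt b \ge \sqrt 6 > 2$, so that in particular the ratio $\lambda^{-2}$ of the two axes lies in $[1, 3/2]$. Next, by \fullref{capac4} we have $g(b)=\sqrt b$ for $b\ge 9$ (using $b>8\tfrac1{36}$), so $E(1,b)\hookrightarrow B(\sqrt b)=E(\sqrt b,\sqrt b)$. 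I would then try to interpolate: it suffices to show $B(\sqrt b)\hookrightarrow E(\lambda\sqrt b,\lambda^{-1}\sqrt b)$ would be too strong (false on volume grounds since the inclusion the other way is an equality of volumes only when $\lambda=1$), so instead I expect one must go back to the ball–packing reformulation of $E(1,b)\hookrightarrow E(\lambda\sqrt b,\lambda^{-1}\sqrt b)$ directly.

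Concretely: approximating $\lambda$ by rationals and clearing denominators, write the target as (a rescaling of) $E(c,d)$ with $c\le d$ integers and $d/c=\lambda^{-2}\in[1,3/2]$, and take $e=1$, $f=b$ (again rationally approximated and scaled to integers). The continued fraction expansion of $d/c$ with $1\le d/c\le 3/2$ begins $[1;\,l_1,\dots]$ with $l_1\ge 2$, so the weight expansion $W(d-c,d)$ is manageable. Then \fullref{mcdprop} turns the problem into embedding $\bigsqcup B(W(e,f))\cup\bigsqcup B(W(d-c,d))$ into $B(d)$, and \fullref{mcdprop2} reduces that to: (1) the volume inequality $d^2 \ge \sum w_i^2$, which here is an \emph{equality} (total volume is preserved), and (2) the inequality $d'\mu\ge\sum m_i w_i$ for every class $(d',\bar m)$ in the exceptional cone $\mathcal E_M$. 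The main obstacle is precisely checking condition (2): I would argue that because the two axes of the target are so close (ratio at most $3/2$), the ``long thin'' part of the packing consists of many balls of nearly equal size, and any exceptional class violating \eqref{dineq} would force, via $E\cdot E=-1$ and $E\cdot(-K)=1$ (i.e.\ $\sum m_i^2 = d'^2\!+\!1$ and $\sum m_i = 3d'-1$), a configuration incompatible with these constraints — a Cauchy–Schwarz / diophantine argument. A cleaner route, which I would pursue first, is to look for an explicit chain of suspensions or inclusions avoiding $\mathcal E_M$ altogether: since $E(1,b)\hookrightarrow E(\sqrt b,\sqrt b)$ for $b\ge 9$ and $\sqrt{2/3}\le\lambda\le 1$ keeps us in a regime where $g$ is known to equal $\sqrt b$, one hopes to factor the embedding through a two–dimensional statement about $E(1,b)\hookrightarrow E(\lambda\sqrt b,\lambda^{-1}\sqrt b)$ using only the monotonicity and scaling of $g$ — but I suspect this does not quite close, and the genuine input is the no–exceptional–obstruction analysis above. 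So the hard part will be showing that no exceptional class of the blown–up $\CP^2$ obstructs the ball packing, and I would expect the proof to invoke \fullref{mcdprop2} together with a case analysis (or a known result, e.g. from \cite{mcdsch} or \cite{li-li}) guaranteeing that for ratios this close to $1$ only the volume obstruction survives.
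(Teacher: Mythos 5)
Your overall framework is the same as the paper's: rationalize $\lambda$ and $\sqrt b$, rescale to an integral ellipsoid embedding $E(uvq^2,uvp^2)\hookrightarrow E(u^2pq,v^2pq)$, convert to a ball packing via \fullref{mcdprop}, and check the two criteria of \fullref{mcdprop2}, with the volume condition holding as an equality. But the entire content of the lemma is the verification of \eqref{dineq} for every exceptional class, and that step is missing from your proposal. Worse, the route you sketch for it would fail: from $\sum m_i^2=d'^2+1$ and $\sum w_i^2=\mu^2$ (volume equality), Cauchy--Schwarz gives only $\sum m_iw_i\le\sqrt{d'^2+1}\,\mu$, which is strictly larger than the required bound $d'\mu$. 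So the ``Cauchy--Schwarz / diophantine'' argument does not close, and ``only the volume obstruction survives for ratios close to $1$'' is exactly the assertion to be proved, not a citable fact.

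The actual mechanism is elementary and you came close to it without landing on it: every weight in $W(e,f)$ is at most $e$, so every ball in the packing has capacity at most $\max\{uvq^2,\,(v^2-u^2)pq\}$, and the adjunction-type relation $\sum m_i=3d'-1$ then gives $\sum m_iw_i\le\max\{uvq^2,(v^2-u^2)pq\}\,(3d'-1)$. The inequality \eqref{dineq} therefore follows once $3uvq^2\le v^2pq$ and $3(v^2-u^2)pq\le v^2pq$, and these are precisely where the two hypotheses enter: the first is $\lambda\le\frac13\sqrt b$ (true since $\lambda\le1$ and $\sqrt b\ge3$), the second is $\lambda^2\ge\frac23$. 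Your proposal never uses $b\ge9$ or $\lambda^2\ge2/3$ at the decisive point, which is the clearest sign the key idea is absent. With this one observation inserted, the rest of your outline matches the paper's proof.
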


\begin{proof}
The proof largely follows the method used by McDuff and Schlenk in
\cite[Corollary $1.2.4$]{mcdsch} where they establish the case $\lambda=1$.
It is sufficient to consider the case when both $\lambda= \unfrac{u}{v}$ and
$\sqrt{b} = \unfrac{p}{q}$ rational, with $p \geq 3q$, $\sqrt{\unfrac{2}{3}} \leq
\unfrac{u}{v} \leq 1$.

Then we need to prove that
\begin{equation}\label{lambdaemb}
E\biggl( 1, \frac{p^2}{q^2} \biggr) \hookrightarrow E\biggl( \frac{up}{vq},
\frac{vp}{uq} \biggr)
\end{equation}
and the latter is equivalent to showing that there exists an embedding
\begin{equation}\label{lambdaemb2}
E(uvp^2,uvq^2) \hookrightarrow E( u^2pq, v^2pq).
\end{equation}
Using now \fullref{mcdprop} we see that this is equivalent to the
existence of the following ball embedding:
\begin{equation}
 \label{uvballs}
 \tsty \bigl( \bigsqcup B\big( W(uvq^2,uvp^2) \big) \bigr) \cup
   \bigl( \bigsqcup B\big( W((v^2-u^2)pq,v^2pq) \big) \bigr) \hookrightarrow B(
v^2pq  ).
\end{equation}
We will use the criteria from \fullref{mcdprop2} to show that the
embedding \eqref{uvballs} does indeed exist.

The first condition is clearly satisfied because our embeddings are volume
preserving.
To  verify the second, note that since $(d',\bar{m}) \in \mathcal{E}_M$ we
have $3 d'-1 = \smash{\sum_{i=1}^M m_i} $.
\begin{align*}\tag*{\hbox{\rlap{Let}}}
\tsty\bigsqcup B\big( W(uvq^2,uvp^2) \big)&\tsty= \bigsqcup_{i=1}^{M_1}  B(w_i),\\
\tag*{\hbox{\rlap{and}}}\tsty\bigsqcup B\big( W((v^2-u^2)pq,v^2pq) \big) &\tsty= \bigsqcup_{i=M_1+1}^{M}  B(w'_i).
\end{align*}
We have
\begin{equation}
  \sum_{i=1}^{M_1} m_i w_i + \sum_{i=M_1+1}^{M} m_i w'_i \leq 
  \sum_{i=1}^{M_1} uv q^2 m_i + \sum_{i=M_1+1}^{M} \big( (v^2-u^2)pq \big) m_i
\end{equation}
since each $w_i \le uvq^2$ and each $w'_i \le (v^2-u^2)pq$. Therefore
\begin{align}
 \label{thegrossineq}
  \sum_{i=1}^{M_1} m_i w_i + \sum_{i=M_1+1}^{M} m_i w'_i &\leq
  \max \big\{ uvq^2, (v^2-u^2)pq) \big\} \sum_{i=1}^{M}  m_i  \\
&=  \max \big\{ uvq^2, (v^2-u^2)pq) \big\} (3 d'-1).
\notag\end{align}
Thus, to show the inequality
 \eqref{dineq}, it is sufficient to verify that  both of the following
inequalities hold:
\begin{align}
uvq^2 (3 d'-1) &\leq v^2 pq d',
\\
(v^2-u^2)pq (3 d'-1) &\leq v^2 pq d'.
\end{align}
The first inequality is guaranteed if $3uvq^2 \leq  v^2 pq $, or $\unfrac{u}{v}
\leq \punfrac{1}{3} \punfrac{p}{q}$. This is certainly true as $\lambda \leq 1$ and
$\sqrt{b} \geq 3$.

The second is guaranteed if $3(v^2-u^2)pq \leq v^ 2 pq$, or, $\unfrac{u^2}{v^2}
\geq \unfrac{2}{3}$. This is true as well since we assumed $\lambda^2 \geq
\unfrac{2}{3}$.
\end{proof}

\subsection{Applications to higher dimensions} \label{sec32}

We start this section by applying \fullref{olga2} to obtain embeddings
in higher dimensions.

For brevity, we will use the notation  $E(a_1, a_2,\ldots ,a_i^{\times m},
a_{i+m}, \ldots a_n)$ if an entry $a_i$ is repeated $m$ times.

\begin{lemma}
 \label{olga3}
  For any $k,n \in \NN$,
  \begin{equation}\label{kpowers}
    E(1^{\times (n-1)},k^n) \hookrightarrow B(k).
  \end{equation}
\end{lemma}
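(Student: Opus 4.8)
The plan is to reduce \eqref{kpowers} to a sequence of four-dimensional embeddings, each propagated to dimension $n$ by \fullref{e}. The case $k=1$ is trivial, so assume $k\ge 2$. It is convenient to record an ellipsoid $E(k^{e_1},\dots,k^{e_n})$ by the multiset $\{e_1,\dots,e_n\}$ of its nonnegative integer exponents (well defined since permuting axes is a symplectomorphism). In this notation $B^{2n}(k)=E(k^{\times n})$ is $\{1^{\times n}\}$ and the ellipsoid $E(1^{\times(n-1)},k^n)$ is $\{0^{\times(n-1)},n\}$, so the goal is to produce a chain of embeddings from $\{0^{\times(n-1)},n\}$ to $\{1^{\times n}\}$.

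The single tool I will need is the following balancing move in dimension four: for every integer $M\ge 2$,
\[
  E(1,k^M)\ \hookrightarrow\ E\bigl(k^{\lfloor M/2\rfloor},k^{\lceil M/2\rceil}\bigr).
\]
If $M=2x+1$ is odd this is exactly \fullref{olga2}. If $M=2x$ is even it asserts $E(1,k^{2x})\hookrightarrow B^4(k^x)$, which follows from \fullref{capac4}: the number $k^{2x}=(k^x)^2$ is a perfect square and is $\ge 4$, hence is either equal to $4$ or at least $9>8\tfrac{1}{36}$, so in every case $g(k^{2x})=\sqrt{k^{2x}}=k^x$. Feeding this embedding into \fullref{e} (after permuting so that the two active axes come first) shows that in an ellipsoid of any dimension one may replace a pair of axes $1$ and $k^M$ by the pair $k^{\lfloor M/2\rfloor},k^{\lceil M/2\rceil}$; on exponent multisets,
\[
  \{0,M\}\sqcup S\ \hookrightarrow\ \bigl\{\lfloor M/2\rfloor,\lceil M/2\rceil\bigr\}\sqcup S .
\]

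The proof then consists of iterating this move, starting from $\{0^{\times(n-1)},n\}$. A move preserves $\sum e_i$, since $0+M=\lfloor M/2\rfloor+\lceil M/2\rceil$; thus $\sum e_i = n$ throughout (this is just the volume identity). As long as the current multiset contains a $0$ it also contains an entry $\ge 2$ — otherwise its $n$ entries would consist of at most $n-1$ ones together with at least one $0$, summing to strictly less than $n$. So I may select a largest entry $M$, which then satisfies $M\ge 2$, together with an entry equal to $0$, and apply the move to this pair. Since $M\ge 2$ we have $1\le \lfloor M/2\rfloor\le \lceil M/2\rceil< M$, so a move destroys exactly one $0$ and creates none. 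Hence after precisely $n-1$ moves the multiset has no zeros; being $n$ positive integers summing to $n$ it equals $\{1^{\times n}\}$. Composing these $n-1$ embeddings (a composite of arrows ``$\hookrightarrow$'' is again such an arrow) gives $E(1^{\times(n-1)},k^n)\hookrightarrow B^{2n}(k)$, as desired.

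I do not expect a serious obstacle. The two points deserving a line of care are the verification that \fullref{capac4} covers every even power $k^{2x}$ occurring in a move — the perfect-square remark above — and the bookkeeping guaranteeing that the iteration terminates at the balanced ellipsoid. The substance of the argument is simply the reduction to dimension four via \fullref{e}, together with the two four-dimensional balancing results \fullref{olga2} and \fullref{capac4}.
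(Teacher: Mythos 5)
Your proof is correct, and it reorganizes the argument in a way that is genuinely different from the paper's. The paper proceeds by induction on the dimension $n$ with a parity split: for odd $n=2m+1$ it applies \fullref{olga2} once and then invokes the inductive hypothesis at levels $m$ and $m+1$, while for even $n=2m$ it makes a recursive call with the base $k$ replaced by $k^2$ and then splits each resulting $k^2$ into two copies of $k$ using $E(1,k^2)\hookrightarrow B(k)$. Your version instead isolates a single four-dimensional ``balancing move'' $E(1,k^M)\hookrightarrow E(k^{\lfloor M/2\rfloor},k^{\lceil M/2\rceil})$ — \fullref{olga2} for odd $M$, and the full strength of \fullref{capac4} (every perfect square $k^{2x}\ge 4$ satisfies $g(k^{2x})=k^x$) for even $M$ — and shows that greedily applying it to a zero-exponent and a maximal exponent terminates at the balanced multiset after exactly $n-1$ steps. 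The underlying four-dimensional inputs and the use of \fullref{e} to propagate them are the same, but your bookkeeping is flatter: it unifies the paper's two parity cases, avoids the somewhat unusual recursive call with base $k^2$, and makes the termination and the identity $\sum e_i=n$ (the volume count) completely transparent. The paper's recursion is slightly more compact to state; yours is easier to verify and would generalize more readily if one wanted to track which specific four-dimensional embeddings are invoked.
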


\begin{proof}
We fix $k$ and proceed by induction on $n$. First, \eqref{kpowers} is obviously
true when $n = 1$, while the case  $n=2$, first proved in \cite{mcduff1}, can
be easily seen as part of  McDuff and Schlenk's results on the values of $g(b)$
from \fullref{capac4}.

Let $n \geq 3$. If $n$ is odd, say $n = 2m+1$, we have
\[
  E(1^{\times (n-1)}, k^n) \hookrightarrow
  E(1^{\times (n-2)}, k^m,k^{m+1}) =
  E(1^{\times (m-1)}, k^m, 1^{\times m}, k^{m+1}),
\]
where the first embedding is from \fullref{olga2}.
By the induction hypothesis and \fullref{e}, the final ellipsoid embeds
into
\[
  E(k^{\times m}, k^{\times (m+1)}) =
  E(k^{\times n}).
\]
On the other hand, if $n$ is even, say $n = 2m$, then
\[
  E(1^{\times (n-1)}, k^n) =
  E(1^{\times m}, 1^{\times (m-1)}, ( k^2)^m),
\]
which by the induction hypothesis (with $k^2,m$ instead of $k,n$) and
\fullref{e} again embeds into
\[
  E(1^{\times m}, ( k^2)^{\times m}).
\]
But by using  \fullref{e} repeatedly and the fact that $E(1,k^2)
\hookrightarrow E(k,k)$ (see again \fullref{capac4}) we can split every
instance of $k^2$ into two copies of $k$, which ends the proof.
\end{proof}

Next, \fullref{lambdatrick} will be used in order to prove a similar result
to \fullref{olga3}, but replacing $k^n$ with a real number.

\begin{definition} \label{def1} Define the sequence $M_i$, $i \ge 2$
inductively as follows:
$M_2 = 8 \tfrac{1}{36}$, $M_n = \max (M_{n-1}^2, \beta_n)$, where
\begin{equation}
  \beta_n =
  \biggl( \frac{\sqrt[2n-2]{3}}{\sqrt[2n-2]{3}- \sqrt[2n-2]{2}} \biggr )^{\unpfrac
{2n(n-1)}{n-2}}
\end{equation}
\end{definition}

 We are now in a position to state the main result of this subsection.

\begin{proposition} \label{olga4} If $b$ is any real number with 
$b
\geq M_n$, then $E(1^{\times (n-1)},b) \hookrightarrow\nb   B( b^{\unfrac{1}{n}}) $.
\vadjust{\goodbreak}
\end{proposition}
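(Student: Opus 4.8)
\textbf{Proof proposal for \fullref{olga4}.} The plan is to argue by induction on $n$, following the same split-into-cases strategy that worked for \fullref{olga3} but now feeding in the real-parameter input from \fullref{lambdatrick} instead of the clean relation $E(1,k^2)\hookrightarrow E(k,k)$. The base case $n=2$ is exactly \fullref{capac4}: for $b\ge M_2=8\tfrac{1}{36}$ we have $g(b)=\sqrt b$, which says precisely $E(1,b)\hookrightarrow B(b^{1/2})$. For the inductive step, suppose $n\ge 3$ and $b\ge M_n=\max(M_{n-1}^2,\beta_n)$. The first move is to peel off one of the ``$1$'' axes and one factor from $b$ by using \fullref{lambdatrick}: choosing a suitable $\lambda\in[\sqrt{2/3},1]$ we get
\[
E(1,b)\hookrightarrow E\bigl(\lambda\sqrt b,\ \lambda^{-1}\sqrt b\bigr),
\]
and hence, by \fullref{e} applied to the remaining $n-2$ axes equal to $1$,
\[
E(1^{\times(n-1)},b)\hookrightarrow E\bigl(\lambda\sqrt b,\ 1^{\times(n-2)},\ \lambda^{-1}\sqrt b\bigr).
\]

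The next step is to normalize the two ``$\sqrt b$'' axes so that the induction hypothesis applies. Dividing through (i.e.\ rescaling) so that one of the new long axes becomes the reference axis, the ellipsoid $E(\lambda\sqrt b,1^{\times(n-2)},\lambda^{-1}\sqrt b)$ is a rescaling of $E(1^{\times(n-1)},\lambda^{-2})$ up to a factor of $\lambda\sqrt b$; more precisely one wants to arrange things so that the middle $n-2$ unit axes, after rescaling, are the smallest, and then the remaining ratio to be controlled is of the form $\lambda^{-2}$ against the others. The cleanest bookkeeping is probably: from $E(\lambda\sqrt b,1^{\times(n-2)},\lambda^{-1}\sqrt b)$, factor out $1$ to normalize the unit axes, leaving two axes of sizes $\lambda\sqrt b$ and $\lambda^{-1}\sqrt b$ both at least $\sqrt b\ge\sqrt{M_n}\ge M_{n-1}$; apply the induction hypothesis in dimension $n-1$ to the sub-ellipsoid $E(1^{\times(n-2)},\lambda^{-1}\sqrt b)$ (valid since $\lambda^{-1}\sqrt b\ge\sqrt b\ge M_{n-1}$) together with \fullref{e} to embed into $E\bigl((\lambda^{-1}\sqrt b)^{1/(n-1)\ \times(n-1)},\ \lambda\sqrt b\bigr)$ after suitable rescaling, and then check this embeds into a ball of the claimed capacity $b^{1/n}$ by a direct inclusion once all $n$ axes are shown to be $\le b^{1/n}$ (up to the $\lambda$-rescaling). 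The constant $\beta_n$ is engineered exactly so that the choice of $\lambda$ — taken near $\sqrt{2/3}$, the worst allowed value — makes $\lambda\sqrt b\le$ (the common value of the other axes) and simultaneously keeps $\lambda\sqrt b$ large enough to iterate; the exponent $\frac{2n(n-1)}{n-2}$ in $\beta_n$ should drop out of solving the resulting inequality $\lambda^{\text{something}}\sqrt b\le b^{1/n}$ for $b$.

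The main obstacle I expect is purely bookkeeping: tracking the rescalings so that at each stage the ellipsoid is written in the normalized form $E(1^{\times(n-1)},\,\cdot\,)$ with the last entry verified to be $\ge M_{n-1}$, and then verifying that the final ellipsoid's largest axis is $\le b^{1/n}$. Concretely, after the two reductions above one must choose $\lambda=\lambda(b,n)$ so that both (i) $\lambda^{-1}\sqrt b\ge M_{n-1}$, which is automatic since $b\ge M_{n-1}^2$ and $\lambda\le 1$, and (ii) the largest axis produced — something like $\lambda\sqrt b$ compared against $(\lambda^{-1}\sqrt b)^{1/(n-1)}\cdot(\text{scaling})$ — comes out $\le b^{1/n}$. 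Setting $\lambda$ as small as possible subject to (ii) being an equality and then checking that $\lambda\ge\sqrt{2/3}$ holds precisely when $b\ge\beta_n$ is the computation that the definition of $\beta_n$ anticipates. Once $\lambda$ is pinned down, the embedding $E(1^{\times(n-1)},b)\hookrightarrow B(b^{1/n})$ follows by composing the \fullref{lambdatrick} step, the induction hypothesis, repeated applications of \fullref{e}, and a final inclusion of an ellipsoid all of whose axes are $\le b^{1/n}$ into $B(b^{1/n})$, where as usual the ``$\hookrightarrow$'' notation absorbs the arbitrarily small loss $\lambda>1$.
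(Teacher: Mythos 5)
Your setup is the same as the paper's: induction on $n$, base case from McDuff--Schlenk, then \fullref{lambdatrick} plus \fullref{e} to pass to $E(1^{\times(n-2)},\lambda^{-1}\sqrt b,\lambda\sqrt b)$, then the induction hypothesis applied to $E(1^{\times(n-2)},\lambda^{-1}\sqrt b)$ (using $\lambda^{-1}\sqrt b\ge\sqrt b\ge M_{n-1}$). But your final step --- ``a direct inclusion once all $n$ axes are shown to be $\le b^{1/n}$'' --- cannot work, and this is a genuine gap rather than bookkeeping. The ellipsoid $E(1^{\times(n-1)},b)$ has the same volume as $B(b^{1/n})$, so the composite embedding must be volume filling; there is no room for a nontrivial inclusion at the end, and in fact after the induction-hypothesis step the remaining ellipsoid is $(\lambda^{-1}\sqrt b)^{1/(n-1)}E(1^{\times(n-1)},z)$ with $z=\lambda^{n/(n-1)}b^{(n-2)/(2n-2)}$, whose long axis is $\lambda\sqrt b = z\cdot(\lambda^{-1}\sqrt b)^{1/(n-1)}$, far \emph{larger} than $b^{1/n}$ whenever $z>1$ (which is the generic situation, since forcing $z=1$ would require $\lambda=b^{-(n-2)/(2n)}$, well below the allowed threshold $\sqrt{2/3}$ for large $b$). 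So one still has to fully fill a ball by a long thin ellipsoid, and no choice of $\lambda$ makes that step an inclusion.

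The missing idea is the specific choice
$\lambda=\bigl(\lfloor b^{(n-2)/(2n(n-1))}\rfloor\big/ b^{(n-2)/(2n(n-1))}\bigr)^{n-1}$,
which is engineered so that $z=\lfloor b^{(n-2)/(2n(n-1))}\rfloor^{\,n}$ is the exact $n$--th power of an integer $m$; one checks $(\lambda^{-1}\sqrt b)^{1/(n-1)}=b^{1/n}/m$, so the target becomes $(b^{1/n}/m)\,E(1^{\times(n-1)},m^n)$, and the final embedding is supplied by \fullref{olga3}, the integer full-filling result $E(1^{\times(n-1)},m^n)\hookrightarrow B(m)$, which carries the real content (McDuff's four-dimensional ellipsoid-to-balls correspondence and the toric decomposition). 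Relatedly, you misidentify the role of $\beta_n$: it is not there to make $\lambda\sqrt b$ small, but to guarantee that this floor-based $\lambda$ still satisfies $\lambda\ge\sqrt{2/3}$ so that \fullref{lambdatrick} applies.
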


\begin{proof}
We will proceed by induction with respect to the dimension $n$. The result
holds when $n=2$ by \cite{mcdsch}; see \fullref{capac4}.
Assume that the result holds for ellipsoids of complex dimension less or equal
than $n-1$.
\[
  \lambda =
  \biggl( \frac{\lfloor b^{\upnpfrac{n-2}{2n(n-1)}}
\rfloor}{b^{\upnpfrac{n-2}{2n(n-1)}}} \biggr)^{n-1}.
\leqno{\hbox{We set}}\]
Then since $b \ge \beta_n$ we observe that
\[
  \sqrt{\frac{2}{3}} \leq
  \biggl( \frac{ b^{\upnpfrac{n-2}{2n(n-1)}}-1}{b^{\upnpfrac{n-2}{2n(n-1)}}}
\biggr)^{n-1} \leq
  \lambda \le
  1.
\]
By \fullref{lambdatrick} and \fullref{e} we have the embedding
\begin{equation}
  E(1^{\times (n-1)},b)
    \hookrightarrow E(1^{\times (n-2)}, \lambda^{-1} \sqrt{b} , \lambda
\sqrt{b} ).
\end{equation}
Meanwhile, since $\lambda^{-1} \sqrt{b} \geq
  M_{n-1}$, by the induction hypothesis we have the embedding
\vspace*{-3pt}
\begin{multline}
  E ( 1^{\times (n-2)}, \lambda^{-1} \sqrt{b} , \lambda \sqrt{b}  )
\hookrightarrow
    E \bigl(  \bigl( (\lambda^{-1} \sqrt{b})^{\unpfrac{1}{n-1}} \bigr)^{\times(n-1)},
\lambda \sqrt{b}  \bigr)  \\
=   ( \lambda^{-1}\sqrt{b}  )^{\unpfrac{1}{n-1}} E ( 1^{\times(n-1)}, z
 ),
\end{multline}
\[
  z =
  \frac{\lambda \sqrt{b}}{(\lambda^{-1} \sqrt{b})^{\unpfrac{1}{n-1}}} =
  ( \lambda^{\unpfrac{n}{n-1}}  ) \cdot  ( b^{\upnpfrac{n-2}{2n -2}}  ) =
   \lfloor b^{\upnpfrac{n-2}{2n(n-1)}} \rfloor^n.
\leqno{\hbox{where}}\]
Thus our result follows from the corresponding result for integers, \fullref{olga3}.
\end{proof}

\subsection[Proof of \ref{fullfill}]{Proof of \fullref{fullfill}} \label{sec33}

Here we show the following.

\begin{theorem}
 \label{fullfill2}
  Suppose that
\begin{gather*}
    b >
    (M_3)^4 a^2
\\  \tag*{\hbox{\rlap{or}}}
    a \ge
    8 \tfrac{1}{36}  \quad\text{and}\quad 
    b >
    (M_3)^2.
\end{gather*}
  Then $E(1,a,b) \hookrightarrow B\big( (ab)^{\unfrac{1}{3}} \big)$.
\end{theorem}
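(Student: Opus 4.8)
The plan is to reduce Theorem~\ref{fullfill2} to \fullref{olga4} in dimension $n=3$, using the basic suspension construction \fullref{e} together with the four-dimensional splitting trick of \fullref{lambdatrick}. The target volume $(ab)^{1/3}$ is exactly the cube root of the product of the three semiaxes of $E(1,a,b)$, so what must be shown is a volume-filling embedding; by \fullref{ob1} any such embedding is automatically optimal, and no further obstruction check is needed.

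First I would treat the second, cleaner hypothesis ($a \ge 8\tfrac{1}{36}=M_2$ and $b > (M_3)^2$). The idea is to rescale: $E(1,a,b) \cong a\,E(1/a,\,1,\,b/a)$, so after dividing by $a$ we must embed $E(a^{-1},1,ba^{-1})$, equivalently (reordering axes) we want $E(1, a^{-1}, ba^{-1})$-type data fed into \fullref{e}. The point is to first collapse the two ``large'' directions $1$ and $b$ into a four-dimensional ellipsoid sitting over the remaining axis. Concretely, apply \fullref{lambdatrick} to the pair $(1,b)$ with a suitable $\lambda$ (legitimate since $b \ge 9$) to get $E(1,b) \hookrightarrow E(\lambda\sqrt b,\lambda^{-1}\sqrt b)$; suspending this over the $a$-axis via \fullref{e} gives
\[
  E(1,a,b) \hookrightarrow E(a,\lambda\sqrt b,\lambda^{-1}\sqrt b).
\]
One then wants $\lambda^{-1}\sqrt b$, the largest semiaxis, to exceed $M_2$ (true since $b>(M_3)^2\ge\dots$ and $\lambda\le1$, using $a\ge M_2$ to control the other direction), so that a further application of \fullref{lambdatrick} in a judiciously chosen variable, followed by \fullref{e} and \fullref{olga4} for $n=3$ (or the McDuff--Schlenk bound for $n=2$), lands everything inside $B((ab)^{1/3})$. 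The bookkeeping here is to choose $\lambda$ — as in the proof of \fullref{olga4}, a rational approximation of the form $\lambda = (\lfloor b^{\alpha}\rfloor/b^{\alpha})^{\text{power}}$ — so that the resulting ``$z$'' is an integer cube, matching \fullref{olga3}; the exponents $\tfrac{1}{4}$ and $\tfrac{1}{2}$ and the constants $(M_3)^4$, $(M_3)^2$ in the hypotheses are precisely what make these rounding steps fit within the ranges required by \fullref{lambdatrick} and \fullref{def1}.

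For the first hypothesis $b > (M_3)^4 a^2$, the strategy is the same but now there is a genuine three-dimensional step: here $a$ itself may be small (as small as $1$), so one cannot feed $a$ directly into \fullref{olga4}. Instead, after the first splitting $E(1,a,b)\hookrightarrow E(a,\lambda\sqrt b,\lambda^{-1}\sqrt b)$ one normalizes by the smallest of these and applies \fullref{olga4} with $n=3$ to the normalized largest axis, whose size is comparable to $b^{1/2}a^{-1/2}$; the condition $b > (M_3)^4 a^2$ is exactly $b^{1/2}a^{-1/2} > (M_3)^2 a^{1/2}$, i.e.\ enough room for the $n=3$ case of \fullref{olga4} (which needs the relevant semiaxis $\ge M_3$) after one more rescaling. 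Composing the embeddings and checking that the final ball has capacity $(ab)^{1/3}$ is then a direct computation with the exponents.

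The main obstacle I anticipate is the last point: verifying that after the two successive applications of \fullref{lambdatrick}/\fullref{e} and the invocation of \fullref{olga4} the composite really lands in $B((ab)^{1/3})$ and not merely in some slightly larger ball. This forces one to track the scaling factors $\lambda^{\pm 1}$ and the rounding errors $\lfloor b^\alpha\rfloor/b^\alpha$ exactly, and it is the reason the hypotheses carry the specific powers of $M_3$ rather than something cleaner; everything else — the suspension, the reduction to integer data, the optimality — follows formally from the results already established. Theorem~\ref{fullfill} is then the special case obtained by taking $a=b$ (so the threshold becomes $a^2+b^2 = 2a^2 \ge 2(M_3)^2$, which after computing $M_3$ from \fullref{def1} gives the stated numerical bound $1.41\times 10^{101}$).
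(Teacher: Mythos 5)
Your overall strategy --- suspend four-dimensional full fillings via \fullref{e} until the ellipsoid takes the shape $c\,E(1,1,z)$ with $z\ge M_3$, then invoke \fullref{olga4} with $n=3$ --- is indeed the paper's strategy, and your treatment of the first hypothesis is essentially correct once one notes that the ``one more rescaling'' there is really a second four-dimensional embedding, applied to the pair $(1,\sqrt{b}/a)$, which is what produces the third axis $b^{1/4}a^{-1/2}>M_3$. But your plan for the second hypothesis has a genuine gap. You propose to split the pair $(1,b)$ first, obtaining $E(a,\lambda\sqrt{b},\lambda^{-1}\sqrt{b})$. Under the hypothesis $a\ge 8\tfrac{1}{36}$ and $b>(M_3)^2$ the number $a$ may be anywhere between $8\tfrac{1}{36}$ and $b$, and then no further four-dimensional equalization brings this ellipsoid to the required form: equalizing any two of the three axes leaves a third axis of relative size roughly $b^{1/4}a^{-1/2}$ or $a/\sqrt{b}$, and there are admissible pairs (for example $a=(M_3)^2$, $b=(M_3)^3$) for which both of these are strictly less than $M_3$, indeed the first is less than $1$. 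The correct first move here is to split the two \emph{small} axes: $E(1,a)\hookrightarrow E(\sqrt{a},\sqrt{a})$ by \fullref{capac4} (using $a\ge 8\tfrac{1}{36}$), suspended over the $b$--axis by \fullref{e}, giving $\sqrt{a}\,E(1,1,b/\sqrt{a})$ with $b/\sqrt{a}\ge\sqrt{b}>M_3$; then \fullref{olga4} finishes, and the capacities multiply out to $(ab)^{1/3}$.

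A second, related point: carrying a general $\lambda\ne1$ from \fullref{lambdatrick} works against you, since \fullref{olga4} needs two \emph{equal} unit axes after normalization; the four-dimensional inputs at this level should simply be the balanced full filling $E(1,x)\hookrightarrow B(\sqrt{x})$ of \fullref{capac4}, valid for $x>8\tfrac{1}{36}$. The $\lambda$--trick and the rounding to integer data are already packaged inside the proof of \fullref{olga4} and should not reappear in the proof of \fullref{fullfill2}. Finally, \fullref{fullfill} is not the special case $a=b$: it follows because $a^2+b^2\ge 1.41\times10^{101}$ forces one of the two hypotheses of \fullref{fullfill2} to hold, the threshold coming from the worst case $a$ near $8\tfrac{1}{36}$ and $b$ near $(M_3)^4a^2$.
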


\begin{remark} As $1 \le a \le b$, the hypotheses of \fullref{fullfill2}
are automatically satisfied~if \[
  a^2+b^2 \geq
  \big( 8\tfrac{1}{36} \big)^2
    \biggl( 1 +  ( 8\tfrac{1}{36}  )^2 \biggl(
\frac{\sqrt[4]{3}}{\sqrt[4]{3} - \sqrt[4]{2}} \biggr)^{96} \biggr) >
  1.41 \times 10^{101}.
\vadjust{\goodbreak}\]
\end{remark}

\begin{proof}
First suppose that $b > (M_3)^4 a^2$. We note that this automatically implies
that $b>8\tfrac{1}{36}$ and $\unfrac{\sqrt{b}}{a}>8\tfrac{1}{36}$.
Then we have
\begin{align*}
  E(1,a,b)  \hookrightarrow {}&E( a,\sqrt{b},\sqrt{b} )\\   
&=  a E( 1,\unfrac{\sqrt{b}}{a}, \unfrac{\sqrt{b}}{a} )  \hookrightarrow
  a E( \unfrac{b^{1/4}}{a^{1/2}}, \unfrac{b^{1/4}}{a^{1/2}},
\unfrac{\sqrt{b}}{a} ) \\
&=  a^{\unfrac{1}{2}}b^{\unfrac{1}{4}}E( 1,1,\unfrac{b^{1/4}}{a^{1/2}} )
 \hookrightarrow
  a^{\unfrac{1}{2}}b^{\unfrac{1}{4}} B( \unfrac{b^{1/12}}{a^{1/6}} )\\
  &=
  B((ab)^{1/3}).
\end{align*}
The first embedding is possible since $b>\smash{8\tfrac{1}{36}}$, the second since
$\unfrac{\sqrt{b}}{a} > \smash{8\tfrac{1}{36}}$, and the third by \fullref{olga4}
since $\smash{\unfrac{b^{1/4}}{a^{1/2}}} > M_3$.

Next we assume that $a \geq 8\tfrac{1}{36}$ and $b > (M_3)^2$.
Then we have embeddings
\begin{align*}
  E(1,a,b)  `\hookrightarrow` E(\sqrt{a}, \sqrt{a}, b)   
`=  `\sqrt{a} E( 1,1,\unfrac{b}{\!\sqrt{a}} ) ` \hookrightarrow`
  \sqrt{a} B( \unfrac{b^{1/3}`}{a^{1/6}} ) `=`
  B(`(ab)^{1/3}).
\end{align*}
Now the first embedding relies on $a\ge \smash{8\tfrac{1}{36}}$ and the second exists
by \fullref{olga4} again, since $\unfrac{b}{\sqrt{a}}>\sqrt{b}>M_3$.
\end{proof}

\section{Packing stability in \texorpdfstring{$\CP^n$}{CP\textcircumflex n}}\label{sec4}

In this section we prove \fullref{pack}. It relies on the following
generalization to higher dimensions of a $4$--dimensional polytope decomposition
introduced by McDuff in \cite{mcduff1}.

\begin{lemma}
 \label{weight}
  For any $k \in \N$, \[\tsty\bigsqcup_k B(1) \hookrightarrow E ( 1^{\times (n-1)},k
 ).\] That is, for any $\rho<1$ the disjoint union of $k$ balls of capacity
$\rho$ can be symplectically embedded in the ellipsoid $E(1^{\times( n-1)},k)$.
\end{lemma}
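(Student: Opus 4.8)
The plan is to exhibit an explicit toric decomposition of the ellipsoid $E(1^{\times(n-1)},k)$ into $k$ standard simplices (each the moment polytope of a ball of capacity $1$), thereby reducing the embedding problem to a statement about toric packings à la Traynor. Recall that $E(1^{\times(n-1)},k)$ has moment image
\[
  \Delta =
  \Bigl\{\, (t_1,\dots,t_n)\in\RR^n_{\ge 0} \bigmid t_1+\cdots+t_{n-1}+\tfrac{t_n}{k}\le 1 \,\Bigr\},
\]
which is the simplex with vertices at the origin, $e_1,\dots,e_{n-1}$, and $k\,e_n$. The first step is to slice $\Delta$ by the hyperplanes $t_n = j$ for $j=1,\dots,k-1$ into $k$ ``horizontal'' pieces $P_j = \Delta\cap\{j-1\le t_n\le j\}$. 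Each $P_j$ is affinely equivalent, via an integral (hence symplectic-lift-compatible) change of coordinates, to the standard simplex
\[
  \Delta_0 = \Bigl\{\, (s_1,\dots,s_n)\in\RR^n_{\ge 0} \bigmid s_1+\cdots+s_n\le 1 \,\Bigr\}
\]
of capacity $1$: the bottom slice $P_1$ is a simplex directly, and the truncated pieces $P_j$ for $j\ge 2$ are prisms that one further cuts into simplices, all combinatorially isomorphic to $\Delta_0$ after an $\mathrm{SL}(n,\ZZ)\ltimes\ZZ^n$ transformation (this is exactly the higher-dimensional analogue of McDuff's four-dimensional polytope decomposition from \cite{mcduff1}, which is the $n=2$ case where $\Delta$ is the triangle with legs $1$ and $k$). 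The key point, as in dimension four, is that the cuts can be arranged so that each of the $k$ resulting closed polytopes is integral-affine-equivalent to $\Delta_0$, and their interiors are disjoint and fill the interior of $\Delta$.

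The second step is to pass from the polytope decomposition to the symplectic embedding. Over each open simplex $\mathaccent "7017 \Delta_0$ the preimage under the moment map is symplectomorphic to an open ball $\mathaccent "7017 B(1)$ minus a set of measure zero; more precisely, by Traynor~\cite[Proposition $5.2$]{traynor} the open toric region corresponding to $\mathaccent "7017\Delta_0$ admits a symplectic embedding of the open ball $\mathaccent "7017 B(1)$, equivalently of $B(\rho)$ for any $\rho<1$. Applying this to each of the $k$ disjoint open pieces simultaneously, and using that their preimages in $E(1^{\times(n-1)},k)$ are disjoint, we obtain for each $\rho<1$ a symplectic embedding $\bigsqcup_k B(\rho)\hookrightarrow E(1^{\times(n-1)},k)$, which is precisely the assertion.

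The main obstacle I expect is the bookkeeping in the first step: verifying that in dimension $n\ge 3$ the truncated prism pieces $P_j$ really do decompose into unit simplices $\Delta_0$ by integral-affine cuts, and that no volume is lost at the interfaces. In dimension four this is McDuff's well-known triangle decomposition and is essentially immediate from the picture (cf. \fullref{figure2}); in higher dimensions one should either cite the toric literature for the corresponding statement or give the explicit unimodular cuts by induction on $n$, slicing off one unit simplex at a time from the prism. Once the combinatorial decomposition is in hand, the analytic input is entirely supplied by the cited results of Traynor, so the remainder is routine.
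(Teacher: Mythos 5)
Your overall strategy---decompose the moment polytope $\Delta$ of $E(1^{\times(n-1)},k)$ into $k$ integral-affine copies of the unit simplex and then apply Traynor's Proposition 5.2 over each open piece---is exactly the paper's, and your second step is fine. The genuine problem is the decomposition itself: slicing $\Delta$ by the hyperplanes $t_n=j$ into slabs $P_j=\Delta\cap\{j-1\le t_n\le j\}$ cannot work, for elementary volume reasons. One has
$\mathrm{Vol}(P_j)=\tfrac{1}{(n-1)!}\int_{j-1}^{j}(1-\unfrac{t}{k})^{n-1}\,dt$,
so the slabs have pairwise distinct volumes, none of which (for $k\ge 2$) equals $\mathrm{Vol}(\Delta_0)=\unfrac{1}{n!}$ except by accident. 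Already in McDuff's case $n=2$, which you describe as immediate from the picture, the bottom slab is a trapezoid of area $1-\unfrac{1}{2k}$ and the top slab is a triangle of area $\unfrac{1}{2k}$; neither is unimodular equivalent to the unit triangle, and the top slab's preimage has volume $\unfrac{1}{n!k^{n-1}}<\mathrm{Vol}(B(1))$, so it cannot contain a ball of capacity close to $1$ at all. Cutting the slabs further only produces more, smaller simplices, whereas the lemma requires exactly $k$ disjoint regions each of which absorbs $B(\rho)$ for every $\rho<1$. So this is not bookkeeping; the first step fails as stated.

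The decomposition that works (McDuff's, and the one used in the paper) is a fan rather than a stack of slabs: take $\Delta_j$ to be the convex hull of $e_1,\dots,e_{n-1}$ together with the two points $(j-1)e_n$ and $je_n$, for $j=1,\dots,k$. These $k$ simplices all share the common face spanned by $e_1,\dots,e_{n-1}$, have disjoint interiors, tile $\Delta$, and each is carried onto $\Delta_1=\Delta_0$ by an iterate of the integral affine shear fixing $t_1,\dots,t_{n-1}$ and sending $t_n\mapsto t_n-(1-t_1-\cdots-t_{n-1})$; in particular each has volume $\unfrac{1}{n!}$. With this decomposition in place, your Traynor step goes through verbatim and yields the lemma.
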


\begin{remark} \label{alternative} Lemma $2.6$ of \cite{mcduff1} proves a
stronger result in dimension $4$, namely that there exists a symplectic
embedding $\bigsqcup_k \r{B}^4(1) \to \r{E} ( 1,k  )$ from the interiors of
the balls to the interior of the ellipsoid. As far as we know, it is unknown if
such an embedding exists in higher dimension. Nevertheless, we give a proof of
\fullref{weight} following this lemma.

There is an alternative approach following Traynor~\cite[Chapters $5$ and $6$]{traynor}; 
see also \cite[Section $9.4$]{Sh1}. The idea is to approximate both the
ellipsoid and the balls by Lagrangian products. Once this is done correctly it
is easy to see that at least an arbitrarily large compact subset of the $k$
open balls can be symplectically embedded in the ellipsoid.
\vadjust{\goodbreak}
\end{remark}

\begin{proof}[Proof of \fullref{weight}]
There is a natural action of the torus $T^n$ on the open ellipsoid $\r{E} (
1^{\times (n-1)},k  ) \subset \CC^n$ given by rotation in each of the $n$
complex coordinates.
Let $\{\v{e}_j\}$ denote the standard basis in $\R^n$. Then the moment polytope
$\Delta $  of the corresponding moment map is the convex hull of the set $\{
0,\v{e}_1, \v{e}_2, \ldots, \v{e}_{n-1},k\v{e}_n \}$ minus the diagonal; that
is, the $n$--dimensional polytope of vectors $p = (x_1, \ldots, x_n)$ that
satisfy
\[x_j \geq 0 \quad\text{and}\quad  x_1 + \cdots + x_{n-1} +\unfrac{x_n}{k} < 1.\]
Inside $\Delta$ we can find $k$ subpolytopes $\Delta_j$ defined by taking the
interior of the convex hull of
$\{ \v{e}_1, \ldots, \v{e}_{n-1}, (j-1)k\v{e}_n, jk\v{e}_n \}$ for $j = 1
\ldots k$; see \fullref{figure3}. The affine map $\Theta\co \R^{n}
\longrightarrow \R^n$ that fixes the first $n-1$ coordinates  and takes the
$n$--th coordinate ~$x_n$ to $x_n - \big( k -(x_1+\cdots+x_{n-1}) \big)$
has integer coefficients and takes each $\Delta_j$ onto the previous polytope
$\Delta_{j-1}$ so that each $\Delta_j$ maps onto $\Delta_1$ by the integral
affine map~ $\Theta^{j-1}$.
    
\begin{figure}[ht!]
\centering
  \includegraphics{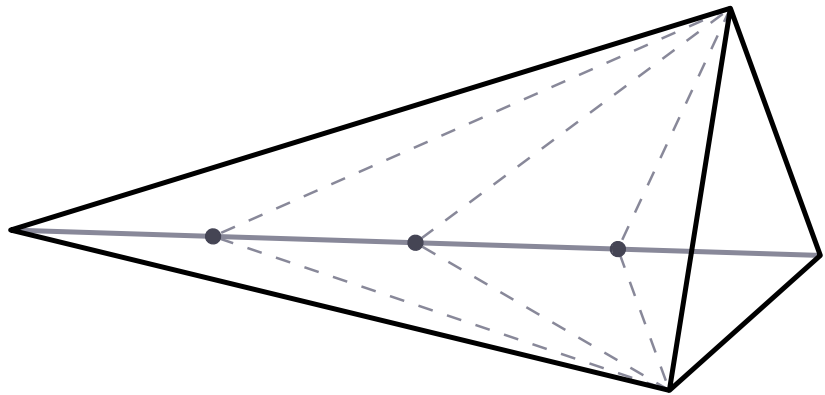}
  \caption{Division of the polytope $\Delta$}
 \label{figure3}
\end{figure}

Now, one can see that $\Delta_1$ is the moment polytope of the open subset
$$U=\{(z_1, \dots, z_n) \in \CC^n\mid z_i \neq 0, \pi|z_1|^2 + \cdots + \pi|z_n|^2
< 1 \} \subset B(1) \subset \CP^n.$$ This contains an embedded symplectic
open ball of capacity $\rho$ for any $\rho<1$, for this, one can apply a
product map such as $\Psi^{\rho}$ defined in \cite[page~420]{traynor}.

From the above, all other $\Delta_j$ must also admit an embedding of a ball of
the same capacity.
Thus, we have found an embedding \[\tsty\bigsqcup_k B(\rho) \to E ( 1^{\times
(n-1)},k  )\] for any $\rho<1$, as required.
\vadjust{\goodbreak}
\end{proof}

\begin{remark} The result above enables us to find embeddings of disjoint balls
into a space whenever ellipsoid embeddings of the specified type are available. 
Theorem ~1.1 in \cite{mcduff1} also gives a converse, that an ellipsoid
embedding exists whenever the corresponding ball embedding does. This result
uses an orbifold blow up as in ~Godinho~ \cite{godh} (see also McDuff 
\cite[Section~2.2]{mcduff5}) then the theory of holomorphic curves and the
rational blow down constructions due to Symington \cite{sym}. No such
technology is yet available in higher dimension.
\end{remark}

\begin{proof}[Proof of \fullref{pack}]
Rescaling, it is required to show that for all natural numbers $k \geq M_n$
there exists an embedding
\[\tsty\bigsqcup_k B(1) \hookrightarrow B^{2n}(k^{1/n}).\]
By \fullref{olga4} there exists an embedding
\[E ( 1^{\times (n-1)},k  ) \hookrightarrow B(k^{1/n}),\]
so it suffices to find an embedding
\[\tsty\bigsqcup_k B(1) \hookrightarrow E ( 1^{\times (n-1)},k  ).\]
But this is \fullref{weight}.
\end{proof}

\begin{remark}[Improving the bounds] \label{improve} 
Our stability bound $M_n$ in \fullref{pack} and \fullref{stability} seem far
from optimal. Improving these bounds will be the topic of a future work
\cite{bh}. This will follow from improving the bound $M_n$ in \fullref{olga4}. Now, according to \cite{mcduff2}, one $4$--dimensional ellipsoid
embeds into another if and only if there are no obstructions coming from the
Embedded Contact Homology capacities defined by M~Hutchings in
\cite{hutchings}. Therefore we have a route to proving \fullref{olga2}
for real rather than just integer $k$ by studying the behavior of the ECH
capacities. Once this is done, \fullref{olga4} follows exactly as \fullref{olga3}.
\end{remark}

\bibliographystyle{gtart}
\bibliography{link}

\end{document}